\documentclass[12pt,oneside]{amsart}
\usepackage{amssymb,verbatim,amscd,amsmath,graphicx}
%\usepackage[utf8]{inputenc}
%\usepackage{pdfsync}

%\nonstopmode

\textwidth=16.00cm
\textheight=22.00cm
\topmargin=0.00cm
\oddsidemargin=0.00cm
\evensidemargin=0.00cm
\headheight=0cm
\headsep=0.5cm
\setlength{\parskip}{3pt}

\numberwithin{equation}{section}
\hyphenation{semi-stable}

\newtheorem{theorem}{Theorem}[section]
\newtheorem{lemma}[theorem]{Lemma}
\newtheorem{proposition}[theorem]{Proposition}
\newtheorem{corollary}[theorem]{Corollary}
\newtheorem{conjecture}[theorem]{Conjecture}

\theoremstyle{definition}
\newtheorem{definition}[theorem]{Definition}
\newtheorem{remark}[theorem]{Remark}
\newtheorem{example}[theorem]{Example}
\newtheorem{runningexample}[theorem]{Running Example}

\newtheorem{notation}[theorem]{Notation}
\newtheorem{question}[theorem]{Question}
\newtheorem{problem}[theorem]{Problem}

\DeclareMathOperator{\Ass}{Ass}

\DeclareMathOperator{\height}{ht}

\DeclareMathOperator{\gens}{gens}

\newcommand{\A}{\mathcal{A}}

\newcommand{\X}{\mathcal{X}}

\newcommand{\bm}{{\bf m}}

\newcommand{\ZZ}{{\mathbb Z}}

\newcommand{\RR}{{\mathbb R}}
\newcommand{\R}{{\mathcal R}}

\def\A{{\mathcal A}}

\def\R{{\mathcal R}}

\def\gr{\operatorname{gr}}

\def\X{{V}}

\def\c{{\bf c}}

\def\y{{\bf y}}
\def\z{{\bf z}}

\def\1{{\bf 1}}
\def\0{{\bf 0}}

\def\mba{{\bf a}}

%%%%%%%%%%%%%%%%%%%%%%%%%%%%%%%%%%%%%%%%%%%%%%%%%%%%%%%%%%%%%%%%%%%%%%%%%%%%%%%%%%%

\begin{document}

%%%%%%%%%%%%%%%%%%%%%%%%%%%%%%%%%%%%%%%%%%%%%%%%%%%%%%%%%%%%%%%%%%%%%%%%%%%%%%%%%%%

\title{Powers of squarefree monomial ideals and combinatorics}

\author{Christopher A. Francisco}
\address{Department of Mathematics, Oklahoma State University,
401 Mathematical Sciences, Stillwater, OK 74078}
\email{chris@math.okstate.edu}
\urladdr{http://www.math.okstate.edu/$\sim$chris}

\author{Huy T\`ai H\`a}
\address{Tulane University \\ Department of Mathematics \\
6823 St. Charles Ave. \\ New Orleans, LA 70118, USA}
\email{tha@tulane.edu}
\urladdr{http://www.math.tulane.edu/$\sim$tai/}

\author{Jeffrey Mermin}
\address{Department of Mathematics, Oklahoma State University,
401 Mathematical Sciences, Stillwater, OK 74078}
\email{mermin@math.okstate.edu}
\urladdr{http://www.math.okstate.edu/$\sim$mermin}

%\keywords{hypergraphs, associated primes, monomial ideals, chromatic number, perfect graphs, Alexander duality, cover ideals}
%\subjclass[2000]{13F55, 05C17, 05C38, 05E99}
\thanks{This work was partially supported by grants from the Simons Foundation
(\#199124 to Francisco and \#202115 to Mermin). H\`a is partially supported
by NSA grant H98230-11-1-0165.}

\begin{abstract}
We survey research relating algebraic properties of powers of
squarefree monomial ideals to combinatorial structures. In particular, we
describe how to detect important
properties of (hyper)graphs by solving ideal membership problems and
computing associated primes. This work leads to algebraic
characterizations of perfect graphs independent of the Strong Perfect
Graph Theorem. In addition, we discuss the equivalence between the
Conforti-Cornu\'ejols conjecture from linear programming and the
question of when symbolic and ordinary powers of squarefree monomial ideals coincide.
\end{abstract}

\maketitle

%%%%%%%%%%%%%%%%%%%%%%%%%%%%%%%%%%%%%%%%%%%%%%%%%%%%%%%%%%%%%%%%%%%%%%%%%%%%%%

\section{Introduction} \label{intro}

Powers of ideals are instrumental objects in commutative algebra. 
In addition, squarefree monomial ideals are intimately
connected to combinatorics. In this paper, we survey work on secant, symbolic
and ordinary powers of squarefree monomial ideals, and their combinatorial
consequences in (hyper)graph theory and linear integer programming.

There are two well-studied basic correspondences between squarefree monomial ideals and
combinatorics. Each arises from the identification of
squarefree monomials with sets of vertices of either a simplicial
complex or a hypergraph. The Stanley-Reisner correspondence associates to
the nonfaces of a simplicial complex $\Delta$ the generators of
a squarefree monomial ideal, and vice-versa. This framework leads to many
important results relating (mostly homological) ideal-theoretic properties of
the ideal to properties of the simplicial complex; see \cite[Chapter
5]{Bruns-Herzog} and \cite[Sections 61-64]{Peeva}.

%Pitch:  two basic correspondences between squarefree monomial ideals and combinatorics.  Stanley-Reisner correspondence works by identifying squarefree monomials with sets of vertices.  Then given an ideal $I$ its non-monomials form a simplicial complex, and vice-versa.  This leads to lots of important results many (mostly homological) ideal-theoretic properties of the ideal to properties of the simplicial complex; see \cite[Chapter 5]{Bruns-Herzog} and \cite[Sections 61-64]{Peeva}.

The edge and cover ideal constructions identify the minimal generators of a squarefree
monomial ideal with the edges (covers) of a simple hypergraph. The edge ideal correspondence is
more na\"ively obvious but less natural than the Stanley-Reisner correspondence,
because the existence of a monomial in this ideal does not translate easily to
its presence as an edge of the (hyper)graph. Nevertheless, this correspondence has
proven effective at understanding properties of (hyper)graphs via algebra.
We focus on powers of squarefree monomial ideals when they are viewed as edge (or cover) ideals of hypergraphs. To the best of our
knowledge, there has been little systematic study of the powers of squarefree ideals from the Stanley-Reisner perspective.

The general theme of this paper is the relationship between symbolic
and ordinary powers of ideals. This topic has been investigated extensively in
the literature (cf. \cite{BH, ELS, HoHu, HKV}). Research along these lines has
revealed rich and deep interactions between the two types of powers of ideals,
and often their equality leads to interesting algebraic and geometric
consequences (cf. \cite{HHT, MT, TTerai, TT, Varbaro}). We shall
see that examining symbolic and ordinary powers of squarefree monomial ideals
also leads to exciting and important combinatorial applications.

The paper is organized as follows. In the next section, we collect notation and terminology. In Section \ref{s:properties}, we survey algebraic techniques for detecting important invariants and properties of (hyper)graphs. We consider three problems: 
\begin{enumerate}
\item computing the chromatic number of a hypergraph,
\item detecting the existence of odd cycles and odd holes in a graph, and
\item finding algebraic characterizations of bipartite and perfect graphs.
\end{enumerate}
We begin by describing two methods for determining the chromatic number of a hypergraph via an ideal-membership problem, one using secant ideals, and the other involving powers of the cover ideal. Additionally, we illustrate how the associated primes of the square of the cover ideal of a graph detect its odd induced cycles.

The results in Section \ref{s:properties} lead naturally to the investigation of associated primes of higher powers of the cover ideal. This is the subject of Section \ref{s:higherpowers}. We explain how to interpret the associated primes of the $s^{\text{th}}$ power of the cover ideal of a hypergraph in terms of coloring properties of its $s^{\text{th}}$ expansion hypergraph. Specializing to the case of graphs yields two algebraic characterizations of perfect graphs that are independent of the Strong Perfect Graph Theorem.

Section \ref{s:packing} is devoted to the study of when a squarefree monomial ideal has the property that its symbolic and ordinary powers are equal. Our focus is the connection between this property and the Conforti-Cornu\'ejols conjecture in linear integer programming. We state the conjecture in its original form and discuss an algebraic reformulation. This provides an algebraic approach for tackling this long-standing conjecture.

\emph{We congratulate David Eisenbud on his $65^{\text{th}}$ birthday, and this paper is written in his honor.}

%%%%%%%%%%%%%%%%%%%%%%%%%%%%%%%%%%%%%%%%%%%%%%%%%%%%%%%%%%%%%%%%

\section{Preliminaries} \label{s:prel}

%We shall collect notation and terminology used in the rest of the paper. 
We begin by defining the central combinatorial object of the paper.

\begin{definition}
A \emph{hypergraph} is a pair $G=(V,E)$ where $V$ is a set, called the
\emph{vertices} of $G$, and $E$ is a subset of $2^{V}$, called the \emph{edges}
of $G$.  A hypergraph is \emph{simple} if no edge contains another; we allow the edges of a simple hypergraph to
contain only one vertex (i.e., isolated \emph{loops}). Simple hypergraphs have
also been studied under other names, including \emph{clutters} and \emph{Sperner
systems}. All hypergraphs in this paper will be simple.
\end{definition}

A \emph{graph} is a
hypergraph in which every edge has cardinality exactly two. We specialize to
graphs to examine special classes, such as cycles and perfect graphs. 

If $W$ is a subset of $V$, the \emph{induced sub-hypergraph} of $G$ on $W$ is
the pair $(W,E_{W})$ where $E_{W}=E\cap 2^{W}$ is the set of edges of $G$
containing only vertices in $W$.

\begin{notation} Throughout the paper, let $V=\{x_{1},\dots,x_{n}\}$ be a set
of vertices. Set $S=K[V]=K[x_{1},\dots,x_{n}]$, where $K$ is a field.  We will abuse notation by
identifying the squarefree monomial $x_{i_{1}}\dots x_{i_{s}}$ with the set
$\{x_{i_{1}},\dots, x_{i_{s}}\}$ of vertices.  If the monomial $m$
corresponds to an edge of $G$ in this way, we will denote the edge by $m$ as
well.
\end{notation}

\begin{definition} The \emph{edge ideal} of a hypergraph $G=(V,E)$ is
\[I(G)=(m:m\in E)\subset S.\]
On the other hand, given a
squarefree monomial ideal $I\subset S$, we let
$G(I)=(V,\gens(I))$ be the hypergraph associated to $I$, where $\gens(I)$ is
the unique set of minimal monomial generators of $I$.
\end{definition}

\begin{definition} A \emph{vertex cover} for a hypergraph $G$ is a set of
vertices $w$ such that every edge hits some vertex of $w$, i.e., $w\cap e\neq \varnothing$ for all edges $e$ of $G$.
\end{definition}

Observe that, if $w$ is a vertex cover, then appending a variable to $w$ results in another vertex cover.  In particular, abusing language slightly, the vertex covers form an ideal of $S$.

\begin{definition}  The \emph{cover ideal} of a hypergraph $G$ is
\[J(G)=(w:w \text{ is a vertex cover of }G).\]
\end{definition}

In practice, we compute cover ideals by taking advantage of duality.

\begin{definition}\label{d:alexdual}  Given a squarefree monomial ideal $I\subset S$, the \emph{Alexander dual} of $I$ is \[I^{\vee}=\bigcap_{m\in \gens(I)} \mathfrak{p}_{m},\] where $\mathfrak{p}_{m}=(x_{i}:x_{i}\in m)$ is the prime ideal generated by the variables of $m$.
\end{definition}

Observe that if $I=I(G)$ is a squarefree monomial ideal, its Alexander dual $I^\vee$ is also squarefree. We shall denote by $G^*$ the hypergraph corresponding to $I^\vee$, and call $G^*$ the \emph{dual hypergraph} of $G$. That is, $I^\vee = I(G^*)$. The edge ideal and cover ideal of a hypergraph are related by the following result.

\begin{proposition} \label{alexduality}
The edge ideal and cover ideal of a hypergraph are dual to each other:
$J(G)=I(G)^{\vee} = I(G^*)$ (and $I(G)=J(G)^{\vee}$). Moreover, minimal generators of
$J(G)$ correspond to minimal vertex covers of $G$, covers such that no proper
subset is also a cover.
\end{proposition}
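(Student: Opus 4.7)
The plan is to prove the chain $J(G) = I(G)^\vee = I(G^*)$ by directly computing $I(G)^\vee$ from Definition~\ref{d:alexdual}, then to deduce $I(G) = J(G)^\vee$ from the involutivity of Alexander duality on squarefree monomial ideals, and finally to derive the statement about minimal generators.

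I would first unwind definitions: since $I(G)$ is minimally generated by the squarefree monomials $m$ for $m \in E$, Definition~\ref{d:alexdual} gives $I(G)^\vee = \bigcap_{e \in E} \pp_e$, where $\pp_e = (x_i : x_i \in e)$. The equality $I(G)^\vee = I(G^*)$ is then immediate from the definition of the dual hypergraph $G^*$, whose edge ideal is $I(G)^\vee$ by fiat.

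The substantive step is to prove $J(G) = \bigcap_{e \in E} \pp_e$, and I would do this by showing that the two ideals contain the same squarefree monomials. A squarefree monomial $w$ (viewed as a subset of $V$) lies in $\pp_e$ precisely when some variable of $w$ belongs to $e$, i.e., when $w \cap e \neq \varnothing$. Hence $w \in \bigcap_{e \in E} \pp_e$ if and only if $w$ meets every edge of $G$, which is exactly the definition of a vertex cover. Thus the squarefree monomials of $\bigcap_{e \in E} \pp_e$ are precisely the vertex covers, and these generate $J(G)$. Since both sides are squarefree monomial ideals---the left because vertex covers are squarefree by construction, and the right because intersections of monomial primes are squarefree---and a squarefree monomial ideal is determined by its squarefree monomials, the two coincide.

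The identity $I(G) = J(G)^\vee$ will then follow from the fact that Alexander duality is an involution on squarefree monomial ideals: applying $(-)^\vee$ twice to $I(G)$ returns $I(G)$. For the final assertion, the minimal generators of $J(G)$ are, by the general characterization of generators of a monomial ideal, precisely those vertex covers $w$ no proper monomial divisor of which lies in $J(G)$---equivalently, those vertex covers no proper subset of which is itself a vertex cover. These are exactly the minimal vertex covers. The whole proof is essentially a translation of the combinatorial condition ``$w$ hits every edge'' into the algebraic condition ``$w \in \pp_e$ for every edge $e$,'' and there is no real obstacle; the only point requiring attention is ensuring that both sides are squarefree, so that comparing their squarefree monomials suffices.
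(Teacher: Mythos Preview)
Your proof is correct and follows essentially the same approach as the paper: the core observation in both is that a squarefree monomial $w$ lies in $\pp_e$ if and only if $w \cap e \neq \varnothing$, so $w \in \bigcap_{e\in E}\pp_e = I(G)^\vee$ exactly when $w$ is a vertex cover. Your version is more thorough in that you explicitly address the parts the paper leaves implicit---namely $I(G)^\vee = I(G^*)$ (by definition), $I(G) = J(G)^\vee$ (by involutivity), and the correspondence between minimal generators and minimal covers---but the substance is the same.
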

\begin{proof}
Suppose $w$ is a cover.  Then for every edge $e$, $w\cap e\neq \varnothing$, so $w\in \mathfrak{p}_{e}$.  Conversely, suppose $w\in I(G)^{\vee}$.  Then, given any edge $e$, we have $w\in \mathfrak{p}_{e}$, i.e., $w\cap e\neq \varnothing$.  In particular, $w$ is a cover.
\end{proof}

We shall also need generalized Alexander duality for arbitrary monomial
ideals. We follow Miller and Sturmfels's book
\cite{MS}, which is a good reference for this topic. Let $\mathbf{a}$ and $\mathbf{b}$ be
vectors in $\mathbb{N}^n$ such that $b_i \le a_i$ for each $i$. As in
\cite[Definition 5.20]{MS},
we define the vector $\mathbf{a \setminus b}$ to be the vector whose $i^{\text{th}}$
entry is given by
\[
a_i \setminus b_i = \left\{
\begin{array}{l l}
 a_i+1-b_i & \text{if } b_i \ge 1 \\
 0 & \text{if } b_i=0. \\
\end{array}
\right.
\]

\begin{definition} \label{def.generalalexanderdual}
Let $\mathbf{a} \in \mathbb{N}^n$, and let $I$ be a monomial ideal such that all
the minimal generators of $I$ divide $\mathbf{x}^\mathbf{a}$.
The \emph{Alexander dual} of $I$ \emph{with respect to} $\mathbf{a}$ is the
ideal
\[ I^{[\mathbf{a}]} = \bigcap_{\mathbf{x}^\mathbf{b} \in \gens(I)} \,
(x_1^{a_1 \setminus b_1}, \dots, x_n^{a_n \setminus b_n}).\]
\end{definition}

For squarefree monomial ideals, one obtains the usual Alexander dual by taking
$\mathbf{a}$ equal to $\mathbf{1}$, the vector with all entries 1, in Definition \ref{def.generalalexanderdual}.
% is simple to see. Let $I$ be a squarefree monomial ideal in $R = K[x_1, \dots, x_n]$. Then
%the {\it Alexander dual} of $I$ is
%$$I^\vee := \bigcap_{\mathbf{x}^\mathbf{b} \in \mathcal{G}(I)} \langle x_i ~|~
%b_i \not= 0 \rangle.$$

By Definition \ref{d:alexdual}, Alexander duality identifies the minimal generators of a squarefree ideal with the primes associated to its dual.  The analogy for generalized Alexander duality identifies the minimal generators of a monomial ideal with the \emph{irreducible components} of its dual.

\begin{definition} A monomial ideal $I$ is \emph{irreducible} if it has the form $I=(x_{1}^{e_{1}},\dots, x_{n}^{e_{n}})$ for $e_{i}\in \ZZ_{>0}\cup\{\infty\}$.  (We use the convention that $x_{i}^{\infty}=0$.)  Observe that the irreducible ideal $I$ is $\mathfrak{p}$-primary, where $\mathfrak{p}=(x_{i}:e_{i}\neq \infty)$.
\end{definition}
\begin{definition} Let $I$ be a monomial ideal.  An \emph{irreducible decomposition} of $I$ is an irredundant decomposition
\[
I=\bigcap Q_{j}
\]
with the $Q_{j}$ irreducible ideals.  We call these $Q_{j}$  \emph{irreducible components} of $I$.  By Corollary \ref{c:irreddecomp} below, there is no choice of decomposition, so the irreducible components are an invariant of the ideal.
\end{definition}
\begin{proposition} Let $I$ be a monomial ideal, and $\mathbf{a}$ be a vector with entries large enough that all the minimal generators of $I$ divide $\mathbf{x}^{\mathbf{a}}$.  Then $(I^{[\mathbf{a}]})^{[\mathbf{a}]}=I$.  
\end{proposition}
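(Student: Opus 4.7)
The plan is to prove the equality monomial-by-monomial: I will show that $\mathbf{x}^{\mathbf{d}}\in I$ if and only if $\mathbf{x}^{\mathbf{d}}\in (I^{[\mathbf{a}]})^{[\mathbf{a}]}$. Two elementary observations drive the argument. First, writing $Q_{\mathbf{b}}$ for the irreducible ideal $(x_i^{a_i\setminus b_i}:b_i\geq 1)$, a monomial $\mathbf{x}^{\mathbf{c}}$ lies in $Q_{\mathbf{b}}$ exactly when some index $i$ has $b_i\geq 1$ and $c_i\geq a_i\setminus b_i$. Second, on $\{0,1,\ldots,a_i\}$ the slash operation is an involution: $a_i\setminus(a_i\setminus b_i)=b_i$, and indeed $b_i\geq 1$ iff $a_i\setminus b_i\geq 1$. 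Since every minimal generator of $I$ divides $\mathbf{x}^{\mathbf{a}}$, and the analogous fact holds for $I^{[\mathbf{a}]}$ after truncation (minimal generators of an intersection of $Q_{\mathbf{b}}$'s can always be truncated coordinatewise to $\mathbf{a}$), I may assume throughout that $\mathbf{d}\leq \mathbf{a}$.

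For the forward containment $I\subseteq (I^{[\mathbf{a}]})^{[\mathbf{a}]}$: given $\mathbf{x}^{\mathbf{d}}\in I$, pick a minimal generator $\mathbf{x}^{\mathbf{b}_0}$ of $I$ with $\mathbf{b}_0\leq \mathbf{d}$. For any minimal generator $\mathbf{x}^{\mathbf{c}}$ of $I^{[\mathbf{a}]}$, the defining intersection forces $\mathbf{x}^{\mathbf{c}}\in Q_{\mathbf{b}_0}$, giving an index $i$ with $(b_0)_i\geq 1$ and $c_i\geq a_i\setminus(b_0)_i$. Applying the involution yields $(b_0)_i\geq a_i\setminus c_i$, so $d_i\geq(b_0)_i\geq a_i\setminus c_i$ witnesses $\mathbf{x}^{\mathbf{d}}\in Q_{\mathbf{c}}$. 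Intersecting over all such $\mathbf{c}$ places $\mathbf{x}^{\mathbf{d}}$ in $(I^{[\mathbf{a}]})^{[\mathbf{a}]}$.

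For the reverse containment, suppose $\mathbf{x}^{\mathbf{d}}\notin I$ with $\mathbf{d}\leq \mathbf{a}$, and define $c_i:=a_i-d_i$. For each minimal generator $\mathbf{x}^{\mathbf{b}}$ of $I$, the fact that $\mathbf{x}^{\mathbf{b}}\nmid\mathbf{x}^{\mathbf{d}}$ supplies an index $i$ with $b_i>d_i$; since $\mathbf{b}\leq\mathbf{a}$ this forces $d_i<a_i$, so $c_i\geq 1$, and the inequality rearranges to $a_i\setminus b_i=a_i+1-b_i\leq a_i-d_i=c_i$, showing $\mathbf{x}^{\mathbf{c}}\in Q_{\mathbf{b}}$ and hence $\mathbf{x}^{\mathbf{c}}\in I^{[\mathbf{a}]}$. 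On the other hand, whenever $c_i\geq 1$ we compute $a_i\setminus c_i=d_i+1>d_i$, so no generator of $Q_{\mathbf{c}}$ divides $\mathbf{x}^{\mathbf{d}}$, i.e., $\mathbf{x}^{\mathbf{d}}\notin Q_{\mathbf{c}}$. Finally, any minimal generator $\mathbf{x}^{\mathbf{c}'}$ of $I^{[\mathbf{a}]}$ with $\mathbf{c}'\leq \mathbf{c}$ satisfies $Q_{\mathbf{c}'}\subseteq Q_{\mathbf{c}}$ (shrinking a positive entry of $\mathbf{c}'$ raises $a_i\setminus c'_i$, and zeroing an entry removes a generator altogether), so $\mathbf{x}^{\mathbf{d}}\notin Q_{\mathbf{c}'}$ and therefore $\mathbf{x}^{\mathbf{d}}\notin (I^{[\mathbf{a}]})^{[\mathbf{a}]}$.

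The main subtlety to watch for is the bookkeeping around coordinates where one of $b_i$, $c_i$, or $c'_i$ vanishes, since each such vanishing suppresses a generator in the corresponding irreducible ideal; the involution identity $a_i\setminus(a_i\setminus b_i)=b_i$ together with its equivalence $b_i\geq 1\iff a_i\setminus b_i\geq 1$ synchronizes these ``support'' conditions, so no side arguments are needed. The proof is essentially the $\mathbf{a}$-weighted version of the squarefree fact that a vertex set meets every minimal vertex cover iff it contains an edge, with $\mathbf{c}=\mathbf{a}-\mathbf{d}$ playing the role of the complementary cover.
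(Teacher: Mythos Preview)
Your argument is correct. The paper does not actually supply its own proof of this proposition; it is stated as a known fact, with Miller and Sturmfels \cite{MS} cited as the reference for generalized Alexander duality. So there is no ``paper's proof'' to compare against, and you have provided a complete, self-contained verification.

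A couple of minor remarks on presentation. Your reduction to $\mathbf{d}\leq\mathbf{a}$ is valid, but the justification that minimal generators of $I^{[\mathbf{a}]}$ divide $\mathbf{x}^{\mathbf{a}}$ deserves one explicit sentence rather than the parenthetical about ``truncation'': each $Q_{\mathbf{b}}$ is generated by pure powers $x_i^{a_i+1-b_i}$ with $1\leq a_i+1-b_i\leq a_i$, and membership of any monomial in such an ideal is unchanged by truncating exponents at $\mathbf{a}$, hence the same holds for the intersection. In the reverse containment, the final step showing $Q_{\mathbf{c}'}\subseteq Q_{\mathbf{c}}$ when $\mathbf{c}'\leq\mathbf{c}$ is right, and your parenthetical reasoning (zeroing an entry drops a generator; shrinking a positive entry raises the exponent) is exactly what is needed. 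The closing remark about the analogy with vertex covers is apt and matches the spirit of Proposition~\ref{alexduality} in the squarefree case.
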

\begin{corollary}\label{c:irreddecomp}  Every monomial ideal has a unique irreducible decomposition.
\end{corollary}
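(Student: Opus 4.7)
The plan is to deduce uniqueness (and existence) of irreducible decompositions directly from the preceding duality proposition, using the fact that the minimal monomial generators of any monomial ideal form a unique set.

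First I would make explicit the bijection, implicit in Definition \ref{def.generalalexanderdual}, between \emph{principal} monomial ideals $(\mathbf{x}^{\mathbf{b}})$ with $\mathbf{b} \leq \mathbf{a}$ and \emph{irreducible} monomial ideals whose generators' exponents are bounded componentwise by $\mathbf{a}$. Concretely, $(\mathbf{x}^{\mathbf{b}})^{[\mathbf{a}]}=(x_i^{a_i\setminus b_i}:b_i\geq 1)$, and every irreducible ideal $(x_1^{e_1},\dots,x_n^{e_n})$ has this form provided $\mathbf{a}$ is chosen coordinatewise at least as large as the finite $e_i$. Under this bijection, a decomposition $I=\bigcap_j Q_j$ with $Q_j=(\mathbf{x}^{\mathbf{b}_j})^{[\mathbf{a}]}$ is dual to the equation $I^{[\mathbf{a}]}=(\mathbf{x}^{\mathbf{b}_1},\dots,\mathbf{x}^{\mathbf{b}_k})$, by the definition of $I^{[\mathbf{a}]}$ applied to the right-hand side together with the involution $(I^{[\mathbf{a}]})^{[\mathbf{a}]}=I$.

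For existence, choose $\mathbf{a}$ large enough that every minimal generator of $I$ divides $\mathbf{x}^{\mathbf{a}}$. Then $I=(I^{[\mathbf{a}]})^{[\mathbf{a}]}$ is, by Definition \ref{def.generalalexanderdual}, already expressed as an intersection of irreducible ideals, one for each minimal generator of $I^{[\mathbf{a}]}$; after pruning redundancies we obtain an irredundant irreducible decomposition.

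For uniqueness, given any irredundant decomposition $I=\bigcap_{j=1}^k Q_j$, I would enlarge $\mathbf{a}$ if necessary so that the bijection above applies to every $Q_j$, and write $Q_j=(\mathbf{x}^{\mathbf{b}_j})^{[\mathbf{a}]}$. Taking Alexander duals and using the involution yields $I^{[\mathbf{a}]}=(\mathbf{x}^{\mathbf{b}_1},\dots,\mathbf{x}^{\mathbf{b}_k})$. The step I expect to be the crux is translating irredundancy across the duality: I would check that $Q_j\supseteq\bigcap_{i\neq j}Q_i$ holds if and only if $(\mathbf{x}^{\mathbf{b}_j})\subseteq(\mathbf{x}^{\mathbf{b}_i}:i\neq j)$, equivalently some $\mathbf{x}^{\mathbf{b}_i}$ divides $\mathbf{x}^{\mathbf{b}_j}$. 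Thus irredundancy of $\{Q_j\}$ is equivalent to the $\mathbf{x}^{\mathbf{b}_j}$ being precisely the minimal generators of $I^{[\mathbf{a}]}$. Since the set of minimal monomial generators of any monomial ideal is an invariant, the set $\{Q_j\}$ is determined by $I$ (and is independent of the auxiliary $\mathbf{a}$, because enlarging $\mathbf{a}$ in the bijection transforms $\mathbf{b}_j$ in a way that leaves $Q_j$ unchanged). This gives uniqueness.
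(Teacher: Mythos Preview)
Your proposal is correct and follows precisely the route the paper intends: the corollary is stated without proof immediately after the involution $(I^{[\mathbf{a}]})^{[\mathbf{a}]}=I$, and your argument---dualizing an arbitrary irredundant irreducible decomposition to a minimal generating set of $I^{[\mathbf{a}]}$ and invoking uniqueness of minimal monomial generators---is exactly the intended deduction. You have simply supplied the details the paper leaves implicit.
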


A recurring idea in our paper is the difference between the powers and symbolic powers of squarefree ideals.  We recall the definition of the symbolic power.

For a squarefree monomial ideal $I$, the \emph{$s^{\text{th}}$ symbolic power} of $I$ is
\[I^{(s)}=\bigcap_{\mathfrak{p}\in\Ass(S/I)}\mathfrak{p}^{s}.\]
%(This definition works because squarefree monomial ideals satisfy $I=\cap \mathfrak{p}$.
(This definition works because squarefree monomial ideals are the intersection of prime ideals.
For general ideals (even general monomial
ideals) the definition is more complicated.)
In general we have $I^{s} \subseteq I^{(s)}$, but the precise nature of
the relationship between the symbolic and ordinary powers of an ideal
is a very active area of research.

In commutative algebra, symbolic and ordinary powers of an ideal are encoded in the symbolic Rees algebra and the ordinary Rees algebra. More specifically, for any ideal $I \subseteq S = K[x_1, \dots, x_n]$, the \emph{Rees algebra} and the \emph{symbolic Rees algebra} of $I$ are
$$\R(I) = \bigoplus_{q \ge 0}I^q t^q \subseteq S[t] \text{ and } \R_s(I) = \bigoplus_{q \ge 0} I^{(q)} t^q \subseteq S[t].$$
The symbolic Rees algebra is closely related to the Rees algebra, but often is richer and more subtle to understand. For instance, while the Rees algebra of a homogeneous ideal is always Noetherian and finitely generated, the symbolic Rees algebra is not necessarily Noetherian. In fact, non-Noetherian symbolic Rees algebras were used to provide counterexamples to Hilbert's Fourteenth Problem (cf. \cite{Nagata, Roberts}). 

%%%%%%%%%%%%%%%%%%%%%%%%%%%%%%%%%%%%%%%%%%%%%%%%%%%%%%%%%%%%%%%%%%%%

\section{Chromatic number and odd cycles in graphs} \label{s:properties}

In this section, we examine how to detect simple graph-theoretic
properties of a hypergraph $G$ from (powers of) its edge and cover ideals. Since the results in this section involving chromatic
number are the same for graphs as for hypergraphs, modulo some
essentially content-free extra notation, we encourage novice readers
to ignore the hypergraph case and think of $G$ as a graph. 

%To that end, we use $G$ to denote both graphs and hypergraphs in this section. In Section~\ref{s:packing}, the hypergraph structure is vital, and there we use $\H$ to denote a hypergraph.

%\subsection{Chromatic number}

\begin{definition}  Let $k$ be a positive integer. A \emph{$k$-coloring} of $G$ is an
  assignment of colors $c_{1},\dots, c_{k}$ to the vertices of $G$ in
  such a way that every edge of cardinality at least 2 contains vertices with different colors.
  We say that $G$ is \emph{$k$-colorable} if a $k$-coloring of $G$
  exists, and that the \emph{chromatic number} $\chi(G)$ of $G$ is the least $k$
  such that $G$ is $k$-colorable.
\end{definition}

\begin{remark} Since loops do not contain two vertices, they cannot contain two vertices of different colors.  Thus the definition above considers only edges with cardinality at least two.  Furthermore, since the presence or absence of loops has no effect on the chromatic number of the graph, we will assume throughout this section that all edges have cardinality at least two.
\end{remark}

\begin{remark} For hypergraphs, some texts instead define a
  coloring of $G$ to be an assignment of colors to the vertices such
  that no edge contains two vertices of the same color.  However, this
  is equivalent to a coloring of the one-skeleton of $G$, so the
  definition above allows us to address a broader class of problems.
\end{remark}

%\begin{remark}  If the hypergraph $G$ has an edge of cardinality one, then it is not colorable with any number of colors.  This is the primary motivation for our requirement in the introduction that all edges have cardinality at least two.
%\end{remark}

\begin{runningexample} \label{running} Let $G$ be the graph obtained by gluing a pentagon to a square along one edge, shown in Figure \ref{f:examplefigure}.  The edge ideal of $G$ is $I(G)=(ab,bc,cd,de,ae,ef,fg,dg)$.  The chromatic number of $G$ is 3:  for example, we may color vertices $a$, $c$, and $g$ red, vertices $b$, $d$, and $f$ yellow, and vertex $e$ blue.
\end{runningexample}
\begin{figure}[hbtf]
\includegraphics[height=1.25in]{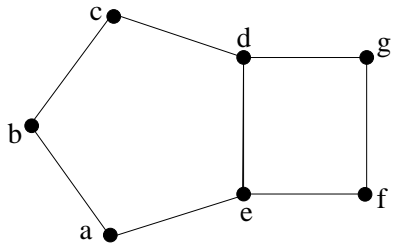}
\caption{The graph $G$ in the running example}\label{f:examplefigure}
\end{figure}

The chromatic number of $G$ can be determined from the solutions to
either of two different ideal membership problems.

Observe that a graph fails to be $k$-colorable if and only if every
assignment of colors to its vertices yields at least one
single-colored edge.  Thus, it suffices to test every color-assignment
simultaneously.  To that end, let $Y_{1},\dots, Y_{k}$ be distinct
copies of the vertices:  $Y_{i}=\{y_{i,1},\dots, y_{i,n}\}$.  We think
of $Y_{i}$ as the $i^{\text{th}}$ color, and the vertices of $Y_{i}$
as being colored with this color.  Now let $I(Y_{i})$ be the edge
ideal $I = I(G)$, but in the variables $Y_{i}$ instead of $V$.  Now an
assignment of
colors to $G$ corresponds to a choice, for each vertex $x_{j}$, of a
colored vertex $y_{i,j}$; or, equivalently, a monomial of the form
$y_{i_{1},1}y_{i_{2},2}\dots y_{i_{n},n}$.  This monomial is a
coloring if and only if it is not contained in the the monomial ideal
$\widetilde{I}=I(Y_{1})+\dots + I(Y_{k})$.  In particular,
$G$ is $k$-colorable if and only if the sum of all
such monomials is not contained in $\widetilde{I}$.

We need some more notation to make the preceding discussion into a
clean statement.  Let $\mathbf{m}=x_{1}\dots x_{n}$, let
$T_{k}=K[Y_{1},\dots,Y_{k}]$, and let $\phi_{k}:S\to T_{k}$ be the
homomorphism sending $x_{i}$ to $y_{1,i}+\dots + y_{k,i}$.  Then
$\phi_{k}(\mathbf{m})$ is the sum of all color-assignments, and
we have shown the following:

\begin{lemma}\label{l:secantcoloringlemma}
With notation as above, $G$ is $k$-colorable if and only if
$\phi_{k}(\mathbf{m})\not\in\widetilde{I}$.
\end{lemma}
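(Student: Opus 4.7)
The plan is to expand $\phi_k(\bm)$ explicitly and then use the fact that a polynomial lies in a monomial ideal iff every monomial in its support does. Since $\phi_k$ is a ring homomorphism,
$$\phi_k(\bm) \;=\; \prod_{j=1}^n \phi_k(x_j) \;=\; \prod_{j=1}^n (y_{1,j}+\cdots+y_{k,j}) \;=\; \sum_{f\colon V\to \{1,\dots,k\}} \bm_f,$$
where $\bm_f := \prod_{j=1}^n y_{f(j),j}$. These $\bm_f$ are pairwise distinct squarefree monomials in $T_k$ (distinct $f$ give different exponent vectors), so no cancellation occurs in this expansion.

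Next, I would interpret each function $f\colon V\to\{1,\dots,k\}$ as a candidate $k$-coloring of $G$ (assigning color $f(j)$ to the vertex $x_j$). The key claim is that $\bm_f \in \widetilde{I}$ if and only if the coloring $f$ fails to be a valid $k$-coloring. Indeed, $\widetilde{I}=I(Y_1)+\cdots+I(Y_k)$ is a monomial ideal whose minimal generators are precisely the edges of $G$ written in each of the $k$ colored variable sets. A generator coming from $I(Y_i)$, corresponding to an edge $e$ of $G$, divides $\bm_f$ exactly when $f$ assigns color $i$ to every vertex of $e$; equivalently, $e$ is monochromatic under $f$. Thus $\bm_f\in \widetilde{I}$ iff $f$ has a monochromatic edge iff $f$ is not a valid $k$-coloring.

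To finish, I would invoke the standard fact that for a monomial ideal $J$ in a polynomial ring over $K$, a polynomial $p\in J$ iff every monomial appearing in $p$ with nonzero coefficient lies in $J$. Applying this to $\phi_k(\bm)=\sum_f \bm_f$, we get
$$\phi_k(\bm)\in \widetilde{I} \;\Longleftrightarrow\; \bm_f\in \widetilde{I} \text{ for every } f \;\Longleftrightarrow\; \text{no } f \text{ is a valid $k$-coloring} \;\Longleftrightarrow\; G \text{ is not $k$-colorable}.$$
Taking contrapositives gives the lemma.

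The whole argument is really a bookkeeping exercise, so I do not expect a conceptual obstacle. The only point that needs care is correctly identifying the generators of $\widetilde{I}$ (the edges of $G$ written once per color class) and the observation that the distinct $\bm_f$ do not combine, so that one can reason termwise about membership in the monomial ideal $\widetilde{I}$.
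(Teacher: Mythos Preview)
Your proof is correct and follows essentially the same approach as the paper: the paper's discussion preceding the lemma already identifies $\phi_k(\mathbf{m})$ as the sum of all color-assignment monomials and observes that each such monomial lies in $\widetilde{I}$ precisely when the corresponding assignment has a monochromatic edge. You have simply written this out more carefully, in particular making explicit the no-cancellation point and the standard fact that membership of a polynomial in a monomial ideal is tested termwise.
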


We recall the definition of the $k^{\text{th}}$ secant ideal. Secant
varieties are common in algebraic geometry, including in many recent
papers of Catalisano, Geramita, and Gimigliano (e.g., \cite{CGG}),
and, as Sturmfels and Sullivant note in \cite{SS}, are playing an important role in algebraic statistics.

\begin{definition}
Let $I\subset S$ be any ideal, and continue to use all the notation above.
Put $T=K[V,Y_{1},\dots, Y_{k}]$ and regard $S$ and $T_{k}$ as subrings
of $T$.  Then the \emph{$k^{\text{th}}$ secant power} of $I$ is
\[
I^{\{k\}}=S\cap\left(\widetilde{I}+\left(\{x_{i}-\phi_{k}(x_{i})\}\right)\right).
\]
\end{definition}

Lemma \ref{l:secantcoloringlemma} becomes the following theorem of Sturmfels and Sullivant \cite{SS}:

\begin{theorem} \label{t:secantcoloring}
$G$ is $k$-colorable if and only if $\mathbf{m}\not\in I(G)^{\{k\}}$. In particular,
$$\chi(G) = \min \{ k ~|~ \mathbf{m} \not\in I(G)^{\{k\}} \}.$$
\end{theorem}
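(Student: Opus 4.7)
The plan is to combine Lemma \ref{l:secantcoloringlemma} with an elimination-theoretic reading of the secant ideal. By that lemma, $k$-colorability of $G$ is equivalent to $\phi_k(\mathbf{m}) \not\in \widetilde{I}$; so all that is left is to show that this in turn is equivalent to $\mathbf{m} \not\in I(G)^{\{k\}}$. Once this bridge is established, the ``in particular'' clause is immediate from the definition of the chromatic number as the smallest $k$ for which $G$ is $k$-colorable.

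To build the bridge, I would analyze the ideal $L = \bigl(\{x_i - \phi_k(x_i)\}\bigr) \subset T$, whose role is precisely to encode the substitution $x_i \mapsto \phi_k(x_i)$. The key observation is that the composition $T_k \hookrightarrow T \twoheadrightarrow T/L$ is an isomorphism: it is surjective because modulo $L$ every $x_i$ rewrites as $\phi_k(x_i) \in T_k$, and it is injective because any nonzero element of $T_k$ involves no $x_i$, while a relation in $L$ cannot hold in $T_k$. The inverse is the $K$-algebra map $\pi : T \to T_k$ that extends $\phi_k:S\to T_k$ by the identity on the $y$-variables. Under $\pi$, the element $\mathbf{m}$ maps to $\phi_k(\mathbf{m})$, while $\widetilde{I}$, which already lives in $T_k \subset T$, maps to itself.

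Combining these facts, for any $f \in T$ we have $f \in \widetilde{I} + L$ if and only if $\pi(f) \in \widetilde{I}$. Applying this to $f = \mathbf{m}$, and noting that $\mathbf{m} \in S$ so the intersection with $S$ in the definition of $I(G)^{\{k\}}$ is automatic, yields
\[
\mathbf{m} \in I(G)^{\{k\}} \iff \phi_k(\mathbf{m}) \in \widetilde{I},
\]
which together with Lemma \ref{l:secantcoloringlemma} proves both assertions. The main subtlety is verifying that $\pi(\widetilde{I} + L) = \widetilde{I}$ and not something strictly larger; but $\widetilde{I}$ is generated by polynomials in the $y$-variables alone, $\pi$ fixes these, and $\pi(L) = 0$, so the image of $\widetilde{I} + L$ is exactly $\widetilde{I}$. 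Beyond this small bookkeeping, the whole argument is formal once the lemma is in hand.
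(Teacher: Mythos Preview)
Your proposal is correct and follows exactly the route the paper intends: the paper simply asserts that Lemma \ref{l:secantcoloringlemma} ``becomes'' the theorem once one unwinds the definition of the secant ideal, and you have made that unwinding explicit via the isomorphism $T/L\cong T_k$ induced by $\pi$. The only place your write-up is slightly loose is the injectivity of $T_k\hookrightarrow T\twoheadrightarrow T/L$; the cleanest justification is the change of variables $z_i=x_i-\phi_k(x_i)$, which identifies $T$ with $T_k[z_1,\dots,z_n]$ and $L$ with $(z_1,\dots,z_n)$, making $T/L=T_k$ transparent.
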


\begin{runningexample}
Let $G$ and $I$ be as in Example \ref{running}.  Then $I^{\{1\}}=I$ and $I^{\{2\}}=(abcde)$ both contain the monomial $abcdefg$.  However, $I^{\{3\}}=0$.  Thus $G$ is 3-colorable but not 2-colorable.
\end{runningexample}

Alternatively, we can characterize chromatic number by looking
directly at powers of the cover ideal.

Observe that, given a $k$-coloring of $G$, the set of vertices which
are not colored with any one fixed color forms a vertex cover of $G$.
In particular, a $k$-coloring yields $k$ different vertex covers, with
each vertex missing from exactly one.  That is, if we denote these
vertex covers $w_{1},\dots, w_{k}$, we have $w_{1}\dots
w_{k}=\mathbf{m}^{k-1}$.  In particular, we have the following result of Francisco, H\`a, and Van Tuyl \cite{FHVTperfect}.

\begin{theorem} \label{t:covercoloring}
$G$ is $k$-colorable if and only if $\mathbf{m}^{k-1}\in J(G)^{k}$. In particular,
$$\chi(G) = \min \{k ~|~ \mathbf{m}^{k-1} \in J(G)^k \}.$$
\end{theorem}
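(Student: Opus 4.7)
The plan is to prove both directions by setting up a bijection-like correspondence between colorings and factorizations of $\mathbf{m}^{k-1}$ as products of vertex covers.

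For the forward direction, suppose $G$ admits a $k$-coloring with color classes $C_1,\dots,C_k$. I would set $w_j$ to be the squarefree monomial corresponding to the complement $V\setminus C_j$. The claim is that each $w_j$ is a vertex cover: any edge $e$ meeting $C_j$ only would be monochromatic (all of color $c_j$), contradicting properness of the coloring (using our standing convention that edges have size $\geq 2$); hence $w_j \cap e \neq \varnothing$ for every edge $e$. Now I compute the product $w_1\cdots w_k$: the exponent of $x_i$ in this product counts the number of indices $j$ with $x_i \notin C_j$. Since every vertex lies in exactly one color class, this count is $k-1$. Therefore $w_1\cdots w_k = \mathbf{m}^{k-1}$, which lies in $J(G)^k$.

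For the reverse direction, suppose $\mathbf{m}^{k-1}\in J(G)^k$. Since $J(G)^k$ is a monomial ideal generated by products $u_1\cdots u_k$ with each $u_j$ a minimal vertex cover, the monomial $\mathbf{m}^{k-1}$ must be divisible by some such product $w_1\cdots w_k$. Comparing exponents of each variable $x_i$ on both sides, $x_i$ appears in at most $k-1$ of the covers $w_j$; equivalently, for each $i$ there exists at least one index $j(i)\in\{1,\dots,k\}$ with $x_i\notin w_{j(i)}$. Choose one such $j(i)$ for each $i$ and define a coloring of $V$ by assigning $x_i$ the color $c_{j(i)}$. To check properness, consider an edge $e$; if all vertices of $e$ received the same color $c_j$, then no vertex of $e$ would belong to $w_j$, contradicting the fact that $w_j$ is a vertex cover. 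Hence the coloring is proper and $G$ is $k$-colorable.

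The ``in particular'' clause $\chi(G)=\min\{k : \mathbf{m}^{k-1}\in J(G)^k\}$ follows immediately from the equivalence, since $\chi(G)$ is by definition the least $k$ for which $G$ is $k$-colorable.

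I don't anticipate a hard step: the whole argument is bookkeeping once one notices the symmetry ``each vertex misses exactly one color class.'' The only subtle point is the reverse direction, where one must remember that membership of a monomial in a monomial ideal forces divisibility by a generator rather than equality, so one obtains covers whose product \emph{divides} $\mathbf{m}^{k-1}$ (not necessarily equal to it); this weaker condition still guarantees that every vertex is excluded from at least one cover, which is exactly what's needed to extract a proper coloring.
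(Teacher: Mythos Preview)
Your proof is correct and follows essentially the same approach as the paper: complements of color classes are covers whose product is $\mathbf{m}^{k-1}$, and conversely a factorization into covers yields a coloring. The only minor difference is in the reverse direction: the paper asserts one can write $\mathbf{m}^{k-1}$ \emph{exactly} as a product of $k$ squarefree covers (so the complements partition $V$), whereas you work with mere divisibility and a choice function $j(i)$; your version is arguably cleaner, since the paper's equality claim requires a small (unstated) argument extending the minimal covers to larger squarefree covers.
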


\begin{proof}  Let $J = J(G)$. Given a $k$-coloring, let $w_{i}$ be the set of
  vertices assigned a color other than $i$.  Then
  $\mathbf{m}^{k-1}=w_{1}\dots w_{k}\in J^{k}$.  Conversely, if
  $\mathbf{m}^{k-1}\in J^{k}$, we may write
  $\mathbf{m}^{k-1}=w_{1}\dots w_{k}$ with each $w_{i}$ a squarefree
  monomial in $J$. Assigning the color $i$ to the complement of
  $w_{i}$ yields a $k$-coloring: indeed, we have $\prod \frac{\mathbf{m}}{w_i} = \frac{\mathbf{m}^k}{\mathbf{m}^{k-1}}=\mathbf{m}$, so the $\frac{\mathbf{m}}{w_i}$ partition $V$.
\end{proof}

\begin{runningexample}
%Let $G$ be the graph of Running Example~\ref{running}, and let $\mathbf{m}=abcdefg$. 
In Example \ref{running}, let $\mathbf{m}=abcdefg$.
The cover ideal $J(G)$ is $(abdf,acdf,bdef,aceg,bceg,bdeg)$.  Because $J$ does not contain $\mathbf{m}^{0}=1$, $G$ is not $1$-colorable.  All 21 generators of $J^{2}$ are divisible by the square of a variable, so $G$ is not $2$-colorable.  Thus $\mathbf{m}\not\in J^{2}$, so $J$ is not $2$-colorable.  However, $J^{3}$ contains $\mathbf{m}^{2}$, so $G$ is $3$-colorable.
\end{runningexample}

\begin{remark} \label{r.bfold}
One can adapt the proof of Theorem~\ref{t:covercoloring} to determine the $b$-fold chromatic number of a graph, the minimum number of colors required when each vertex is assigned $b$ colors, and adjacent vertices must have disjoint color sets. See \cite[Theorem 3.6]{FHVTperfect}.
\end{remark}

\begin{remark} The ideal membership problems in Theorems \ref{t:secantcoloring} and \ref{t:covercoloring} are for monomial ideals, and so they are computationally simple. On the other hand, computing the chromatic number is an \textbf{NP}-complete problem. The bottleneck in the algebraic algorithms derived from Theorems \ref{t:secantcoloring} and \ref{t:covercoloring} is the computation of the secant ideal $I(G)^{\{k\}}$ or the cover ideal $J(G)$ given $G$; these problems are both \textbf{NP}-complete.
\end{remark}

It is naturally interesting to investigate the following problem.

\begin{problem} Find algebraic algorithms to compute the chromatic number $\chi(G)$ based on algebraic invariants and properties of the edge ideal $I(G)$.
\end{problem}

%\subsection{Odd cycles and associated primes of $S/J^2$}
For the rest of this section, we shall restrict our attention to the case when $G$ is a graph (i.e., not a hypergraph), and consider the problem of identifying odd cycles and odd holes in $G$. As before, let $I = I(G)$ and $J = J(G)$.

Recall that a \emph{bipartite graph} is a two-colorable graph, or,
equivalently, a graph with no odd circuits.  This yields two corollaries
to Theorem \ref{t:covercoloring}:

\begin{corollary}\label{c:bipartite}
$G$ is a bipartite graph if and only if $\mathbf{m}\in J^{2}$.
\end{corollary}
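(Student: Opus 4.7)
The plan is to derive the corollary directly from Theorem~\ref{t:covercoloring} by specializing to the case $k=2$. The key observation is the classical equivalence between bipartiteness and $2$-colorability: a graph $G$ is bipartite precisely when its vertex set can be partitioned into two independent sets, which is exactly a valid $2$-coloring in the sense of the definition used in this section (noting the standing assumption that all edges have cardinality at least two, so there are no loops to worry about).

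Given this equivalence, I would invoke Theorem~\ref{t:covercoloring} with $k=2$, which states that $G$ is $2$-colorable if and only if $\mathbf{m}^{2-1} = \mathbf{m} \in J(G)^2$. Chaining the two equivalences yields exactly the statement of the corollary: $G$ is bipartite $\iff$ $G$ is $2$-colorable $\iff$ $\mathbf{m} \in J^2$.

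There is essentially no obstacle here, since both equivalences are already established. The only thing worth remarking on is that the combinatorial content of $\mathbf{m} \in J^2$ in this case is transparent and matches the direct bipartite picture: writing $\mathbf{m} = w_1 w_2$ with $w_1, w_2$ squarefree monomial generators of $J$ amounts to exhibiting two complementary vertex covers $w_1$ and $w_2$ whose complements $\mathbf{m}/w_1$ and $\mathbf{m}/w_2$ partition $V$ into two independent sets; these independent sets are precisely the two color classes of a bipartition of $G$. This is a useful sanity check but does not require its own argument beyond what is already in the proof of Theorem~\ref{t:covercoloring}.
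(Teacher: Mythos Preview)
Your proposal is correct and matches the paper's approach exactly: the paper states this corollary immediately after noting that a bipartite graph is precisely a two-colorable graph, deriving it directly from Theorem~\ref{t:covercoloring} with $k=2$. Your additional remark unpacking $\mathbf{m}=w_1w_2$ as a bipartition is a nice sanity check but, as you say, adds nothing beyond what is already in the proof of Theorem~\ref{t:covercoloring}.
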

\begin{corollary}\label{c:oddcycle}
If $G$ is a graph, then $G$ contains an odd circuit if and only if
$\mathbf{m}\not\in J^{2}$.
\end{corollary}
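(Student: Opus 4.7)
The plan is to deduce this directly from Corollary \ref{c:bipartite}, combined with the classical graph-theoretic fact that a graph is bipartite if and only if it contains no odd cycle.

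By Corollary \ref{c:bipartite}, $G$ is bipartite if and only if $\mathbf{m}\in J^{2}$; taking contrapositives, $\mathbf{m}\notin J^{2}$ if and only if $G$ is not bipartite. Since a graph is bipartite precisely when it is $2$-colorable, and this is equivalent to the absence of odd circuits, chaining the two equivalences yields the stated equivalence between $\mathbf{m}\notin J^{2}$ and the existence of an odd circuit in $G$.

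There is essentially no obstacle, as the algebraic substance is already packaged in Theorem \ref{t:covercoloring} at $k=2$; the corollary is merely the contrapositive phrasing of Corollary \ref{c:bipartite}, translated through the standard dictionary between $2$-colorability and the absence of odd cycles. If one wished to avoid citing that dictionary, the only nontrivial direction (no odd cycle implies $2$-colorable) can be proved in a line by choosing a spanning tree of each connected component of $G$, coloring vertices by the parity of their distance from a chosen root, and observing that any non-tree edge must join vertices of opposite parity, for otherwise such an edge would close up with the tree path between its endpoints into an odd cycle.
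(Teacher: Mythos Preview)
Your proof is correct and matches the paper's approach exactly: the paper presents this corollary (together with Corollary~\ref{c:bipartite}) as an immediate consequence of Theorem~\ref{t:covercoloring} via the standard equivalence ``bipartite $\Leftrightarrow$ 2-colorable $\Leftrightarrow$ no odd circuits,'' and gives no further argument. Your optional spanning-tree justification of that equivalence is a reasonable addition but not required.
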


%In Running Example~\ref{running}, $\mathbf{m} \notin J^2$, which is a result of the pentagon in the graph $G$.

It is natural to ask if we can locate the offending odd circuits.
In fact, we can identify the \emph{induced odd cycles} from the
associated primes of $J^{2}$.

\begin{definition}  Let $C=(x_{i_{1}},\dots, x_{i_{s}}, x_{i_{1}})$ be a circuit in
  $G$.  We say that $C$ is an \emph{induced cycle} if the induced
  subgraph of $G$ on $W=\{x_{i_{1}},\dots, x_{i_{s}}\}$ has no edges
  except those connecting consecutive vertices of $C$.  Equivalently,
  $C$ is an induced cycle if it has no chords.
\end{definition}

\begin{runningexample}
$G$ has induced cycles $abcde$ and $defg$.  The circuit $abcdgfe$ isn't an induced cycle, since it has the chord $de$.
\end{runningexample}

Simis and Ulrich prove that the odd induced cycles are the generators of the second secant ideal of $I$ \cite{SU}.
\begin{theorem}\label{simisulrich}
Let $G$ be a graph with edge ideal $I$.  Then a squarefree monomial $m$ is a generator of $I^{\{2\}}$ if and only if $G_{m}$ is an odd induced cycle.   
\end{theorem}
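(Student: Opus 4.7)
The plan is to combine a coloring characterization of when a squarefree monomial lies in $I^{\{2\}}$ --- essentially the argument of Lemma \ref{l:secantcoloringlemma} and Theorem \ref{t:secantcoloring} applied to the induced subgraph $G_m$ --- with a purely graph-theoretic analysis of what minimality then forces on $G_m$. For the coloring characterization, let $m$ be a squarefree monomial and let $W=\{x_i : x_i\mid m\}$ be its support, so that $G_m=G_W$. Modulo the relations $x_i-\phi_2(x_i)$, the class of $m$ in $T_2$ is
\[
\phi_2(m)=\prod_{x_i\in W}(y_{1,i}+y_{2,i})=\sum_{f\colon W\to\{1,2\}}\prod_{x_i\in W} y_{f(x_i),i},
\]
so $m\in I^{\{2\}}$ iff $\phi_2(m)\in\widetilde I$. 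Since $\widetilde I$ is a monomial ideal and the expansion above has no cancellation, this happens iff every summand lies in $\widetilde I$; by inspection of the generators of $\widetilde I$ that says precisely that every $2$-coloring $f$ of $W$ produces a monochromatic edge of $G$ whose endpoints both lie in $W$, i.e., a monochromatic edge of $G_m$. Equivalently, $G_m$ is non-bipartite, i.e., contains an odd cycle.

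It follows that $m$ is a minimal generator of $I^{\{2\}}$ iff $G_m$ contains an odd cycle while every proper induced subgraph $G_{W'}$ (for $W'\subsetneq W$) is bipartite. The direction $(\Leftarrow)$ is immediate: an induced odd cycle is non-bipartite, and deleting any vertex leaves a path. For $(\Rightarrow)$, bipartiteness of every proper induced subgraph forces every vertex of $W$ to lie on every odd cycle of $G_m$; fixing one such odd cycle $C$, it must visit all of $W$, so $C$ is a Hamilton odd cycle, say $C=(v_1,\ldots,v_k,v_1)$ with $k=|W|$ odd.

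The main --- and essentially only --- obstacle is ruling out chords of $C$. Suppose $v_iv_j$ were a chord of $C$ in $G_m$. The two arcs of $C$ from $v_i$ to $v_j$, each closed off by this chord, form two cycles whose lengths sum to $k+2$, so exactly one of them is odd; moreover both cycles use only a proper subset of $W$. Deleting from $W$ any vertex of $C$ that avoids the shorter odd cycle leaves a proper induced subgraph still containing this odd cycle, contradicting the hypothesis that every proper induced subgraph of $G_m$ is bipartite. Hence $C$ has no chords, so $G_m=C$ is an induced odd cycle, which finishes the proof.
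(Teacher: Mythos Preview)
Your proof is correct and follows the same overall strategy as the paper's (deliberately brief) sketch: reduce membership of a squarefree monomial $m$ in $I^{\{2\}}$ to non-$2$-colorability of $G_m$, and then exploit minimality. The one substantive difference is in the $(\Rightarrow)$ direction. The paper appeals to the fact that a non-bipartite graph contains an odd \emph{induced} cycle $C$; since $m_C\mid m$ and $m_C\in I^{\{2\}}$, minimality of $m$ forces $m_C=m$ and hence $G_m=C$ in a single step. You instead start from an arbitrary odd cycle, argue it must be Hamiltonian, and then rule out chords via the arc-splitting argument. Your route is slightly longer but more self-contained, relying only on the elementary equivalence ``non-bipartite $\Leftrightarrow$ contains an odd cycle'' rather than the (easy but extra) fact that a shortest odd cycle is induced. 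One cosmetic remark: in your final paragraph, ``the shorter odd cycle'' should simply read ``the odd one of the two sub-cycles,'' since exactly one of them is odd and it need not be the shorter; the argument itself is unaffected.
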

\begin{proof}[Sketch of proof.] If $G_{m}$ is an odd induced cycle, then $G_{m}$ and hence $G$ are not $2$-colorable.  On the other hand, if $m\in I^{\{2\}}$, then $G_{m}$ is not $2$-colorable and so has an odd induced cycle.
\end{proof}

Now suppose that $G$ is a cycle on $(2\ell -1)$ vertices, so without
loss of generality $I=(x_{1}x_{2}, x_{2}x_{3}, \dots,
x_{2\ell-1}x_{1})$. Then the generators of $J$ include the $(2\ell -1)$ vertex
covers $w_{i}=x_{i}x_{i+2}x_{i+4}\dots x_{i+2\ell -2}$ obtained by
starting anywhere in the cycle and taking every second vertex until we
wrap around to an adjacent vertex.  (Here we have taken the subscripts
mod ($2\ell -1$) for notational sanity.)  All other generators have
higher degree.  In particular, the
generators of $J$ all have degree at least $\ell$, so the generators
of $J^{2}$ have degree at least $2\ell$.  Thus $\mathbf{m}\not\in
J^{2}$, since $\deg(\mathbf{m})=2\ell -1$.  However, we have
$\mathbf{m}x_{i}=w_{i}w_{i+1}\in J^{2}$ for all $x_{i}$.  Thus $\mathbf{m}$ is in the
socle of $S/J^{2}$, and in particular this socle is nonempty, so
$\mathfrak{p}_{\mathbf{m}}=(x_{1},\dots, x_{2\ell -1})$ is associated to
$J^{2}$.  In fact, it is a moderately difficult computation to find an
irredundant primary decomposition:
\begin{proposition}\label{p:oddcycledecomp}
Let $G$ be the odd cycle on $x_{1},\dots,x_{2\ell -1}$.  Then
\[
J^{2}=\left[\bigcap_{i=1}^{2\ell -1} (x_{i},x_{i+1})^{2}\right] \cap
(x_{1}^{2},\dots, x_{2\ell -1}^{2}).
\]
\end{proposition}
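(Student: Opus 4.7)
The plan is to verify both containments; the inclusion $J^2 \subseteq \bigcap_i (x_i,x_{i+1})^2 \cap (x_1^2,\dots,x_{2\ell-1}^2)$ is quick. By Alexander duality $J = \bigcap_i (x_i,x_{i+1})$, so squaring gives $J^2 \subseteq (x_i,x_{i+1})^2$ for each $i$. And every generator of $J^2$ is a product $w_iw_j$, a monomial of degree $2\ell$ supported on only $2\ell-1$ variables, so pigeonhole forces some variable to appear with exponent at least $2$, placing $w_iw_j \in (x_1^2,\dots,x_{2\ell-1}^2)$.

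For the reverse containment, I would recast the statement as: $\mathbf{x}^{\mathbf{a}} \in J^2$ whenever (A) $a_i + a_{i+1} \geq 2$ for every cyclic edge and (B) $a_j \geq 2$ for some $j$. Replacing each $a_i$ by $\min(a_i,2)$ preserves both conditions, so assume $\mathbf{a} \in \{0,1,2\}^{2\ell-1}$. Partition the vertices as $V = T \sqcup O \sqcup Z$ according to whether $a_i = 2$, $1$, or $0$. Condition (B) gives $T \neq \varnothing$; condition (A) forces each vertex of $Z$ to have both cyclic neighbors in $T$, hence any ``arc'' of $V \setminus T$ (a maximal block of consecutive non-$T$ vertices on the cycle) of length at least $2$ contains only $O$-vertices.

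The construction then produces two vertex covers $C, C' \subseteq V$ with $\chi_C + \chi_{C'} \leq \mathbf{a}$ as follows. Place every $T$-vertex in both $C$ and $C'$; on each arc $v_1,\dots,v_s$ with $s \geq 2$, alternate by putting $v_j \in C$ when $j$ is odd and $v_j \in C'$ when $j$ is even; put nothing from an arc of length at most $1$ into either cover. Each of $C$ and $C'$ is a vertex cover of the cycle: edges adjacent to $T$ are covered by $T \subseteq C \cap C'$, and internal edges of long arcs are covered by the alternation. Componentwise, $\chi_C + \chi_{C'}$ equals $2$ on $T$, equals $1$ on arcs of length $\geq 2$, and vanishes on arcs of length $\leq 1$, hence is dominated by $\mathbf{a}$. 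Therefore $\mathbf{x}^{C} \cdot \mathbf{x}^{C'} \in J \cdot J = J^2$ divides $\mathbf{x}^{\mathbf{a}}$, giving $\mathbf{x}^{\mathbf{a}} \in J^2$.

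The main obstacle is the structural observation that $Z$-vertices can appear only as isolated singletons in length-$1$ arcs sandwiched between two $T$-vertices; this is precisely what makes the alternation on longer arcs legitimate, since one never tries to include a vertex whose $\mathbf{a}$-entry is $0$. Once that structural fact is in place, the cover construction and the verification are uniform in $\ell$, and the decomposition is automatically irredundant: the components $(x_i,x_{i+1})^2$ are primary to distinct minimal primes of $J^2$, while $(x_1^2,\dots,x_{2\ell-1}^2)$ is primary to the embedded prime $\mathfrak{p}_{\mathbf{m}} = (x_1,\dots,x_{2\ell-1})$ already shown in the discussion preceding the proposition to be associated to $J^2$.
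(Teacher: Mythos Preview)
The paper does not actually supply a proof of this proposition; it is presented as the outcome of ``a moderately difficult computation'' and left without argument, so there is nothing to compare against. Your proof is correct.

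One small imprecision in the forward direction: you write that ``every generator of $J^{2}$ is a product $w_{i}w_{j}$, a monomial of degree $2\ell$.'' For $2\ell-1\geq 9$ the odd cycle has minimal vertex covers of size strictly greater than $\ell$ (e.g.\ $\{2,3,5,6,8,9\}$ in $C_{9}$), so $J$ has minimal generators other than the $w_{i}$, and generators of $J^{2}$ can have degree exceeding $2\ell$. This does not affect your pigeonhole argument, which only needs degree $\geq 2\ell$; just phrase it that way.

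The reverse containment is handled cleanly. The reduction to $\mathbf{a}\in\{0,1,2\}^{2\ell-1}$ is valid (the inequality $\min(a_{i},2)+\min(a_{i+1},2)\geq 2$ whenever $a_{i}+a_{i+1}\geq 2$ is an easy case check), the structural observation that every $Z$-vertex must sit alone between two $T$-vertices is exactly right, and the alternating construction of $C$ and $C'$ covers every edge---including boundary edges of arcs and the degenerate case $T=V$---while satisfying $\chi_{C}+\chi_{C'}\leq\mathbf{a}$. The irredundancy remark at the end is also correct: the $(x_{i},x_{i+1})^{2}$ are primary to the distinct minimal primes of $J$, and the embedded component cannot be dropped since $\mathbf{m}\in J^{(2)}\smallsetminus J^{2}$.
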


\begin{remark}
Proposition \ref{p:oddcycledecomp} picks out the difference between $J^{2}$ and the symbolic square $J^{(2)}$ when $G$ is an odd cycle.  The product of the variables $\mathbf{m}$ appears in $\mathfrak{p}^{2}$ for all $\mathfrak{p}\in \Ass(S/J)$, but is missing from $J^{2}$.  (Combinatorially, this corresponds to $\mathbf{m}$ being a double cover of $G$ that cannot be partitioned into two single covers.)  Thus $\mathbf{m}\in J^{(2)}\smallsetminus J^{2}$.  
\end{remark}

\begin{remark}  We can attempt a similar analysis on an even cycle,
  but we find only two smallest vertex covers,
  $w_{\text{odd}}=x_{1}\dots x_{2\ell-1}$ and
  $w_{\text{even}}=x_{2}\dots x_{2\ell}$.  Then
  $\mathbf{m}=w_{\text{odd}}w_{\text{even}}\in J^{2}$ is not a socle
  element.  In this case Theorem \ref{t:oddcycledecomp} will tell
  us that $J^{2}$ has primary decomposition $\bigcap
  (x_{i},x_{i+1})^{2}$, i.e., $J^{(2)}=J^{2}$.
\end{remark}

In fact, Francisco, H\`a, and Van Tuyl show that, for an arbitrary
graph $G$, the odd cycles can be read off from the associated primes
of $J^{2}$ \cite{FHVT}.  Given a set $W\subset V$, put
$\mathfrak{p}_{W}^{\langle 2\rangle }=(x_{i}^{2}:x_{i}\in W)$.  Then we have:
%FIXED THIS
%\textbf{(JAM:  Should probably find a way to remove the [FHVT] here,
%  since we don't do that anywhere else.)}
\begin{theorem}\label{t:oddcycledecomp}  Let
  $G$ be a graph.  Then
  $J^{2}$ has irredundant primary decomposition
\[
J^{2}=\left[\bigcap_{e\in E(G)}\mathfrak{p}_{e}^{2}\right] \cap
\left[\bigcap_{G_{W}\text{ is an induced odd
      cycle}}\mathfrak{p}_{W}^{\langle 2\rangle}\right].
\]
\end{theorem}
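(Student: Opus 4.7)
\emph{Proof plan.} I plan to prove both inclusions of the claimed primary decomposition and then briefly justify irredundancy. It is convenient first to observe that, by Proposition~\ref{alexduality}, $\Ass(S/J) = \{\mathfrak{p}_e : e \in E(G)\}$, so the first intersection on the right is exactly $J^{(2)}$; the statement is thus $J^2 = J^{(2)} \cap \bigcap_W \mathfrak{p}_W^{\langle 2\rangle}$, with $W$ running over vertex sets of induced odd cycles of $G$.

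\medskip

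For the inclusion of $J^2$ into the right-hand side, the containment $J^2 \subseteq \mathfrak{p}_e^2$ is immediate since every vertex cover meets $e$. For an induced odd cycle $G_W$ on $2\ell - 1$ vertices, I would take a typical generator $uv$ of $J^2$ with $u,v$ minimal vertex covers of $G$ and restrict to $W$: each of $u \cap W$ and $v \cap W$ is a vertex cover of $G_W$, hence has at least $\ell$ vertices, so by pigeonhole some $x_i \in u \cap v \cap W$; then $x_i^2 \mid uv$ and $uv \in \mathfrak{p}_W^{\langle 2\rangle}$.

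\medskip

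The substantive half is the reverse inclusion. Given a monomial $m = \prod x_i^{a_i}$ on the right, I would truncate each $a_i$ at $2$ (harmless on both sides) and partition $V = A \sqcup B \sqcup Z$ with $A = \{a_i = 2\}$, $B = \{a_i = 1\}$, $Z = \{a_i = 0\}$. The conditions $m \in \mathfrak{p}_e^2$ force $a_i + a_j \ge 2$ on every edge, which equivalently says $N(Z) \subseteq A$ and excludes edges inside $Z$ or between $Z$ and $B$. The goal is to write $m$ as a product of two vertex covers, and I would set $u = A \cup B_1$, $v = A \cup B_2$, where $B = B_1 \sqcup B_2$ is a proper $2$-coloring of the induced subgraph $G_B$. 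The complements $V \setminus u = Z \cup B_2$ and $V \setminus v = Z \cup B_1$ are then independent in $G$ (the constraint $N(Z) \subseteq A$ kills edges from $Z$ to $B_i$, and $B_1, B_2$ are independent in $G_B$ by construction), so $u$ and $v$ are vertex covers; and $uv = \prod_{i \in A} x_i^2 \prod_{i \in B} x_i$ divides $m$.

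\medskip

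The main obstacle is proving that $G_B$ is bipartite. My argument would be: if $G_B$ contains an odd cycle, then a shortest one $C$ is induced in $G_B$, and since $G_B$ is a full induced subgraph of $G$, $C$ is also induced in $G$; its vertex set $W$ lies in $B$, so no $x_i \in W$ satisfies $a_i \ge 2$, contradicting $m \in \mathfrak{p}_W^{\langle 2\rangle}$. Irredundancy is then straightforward: the monomial $x_i x_j$ witnesses the necessity of $\mathfrak{p}_{\{x_i,x_j\}}^2$ (it lies in no other $\mathfrak{p}_{e'}^2$ and in no $\mathfrak{p}_W^{\langle 2\rangle}$), and for an induced odd cycle $W$, the monomial $\prod_{i \notin W} x_i^2 \prod_{i \in W} x_i$ lies in every other component but not in $\mathfrak{p}_W^{\langle 2\rangle}$, since the only induced cycle inside $G_W$ is $W$ itself.
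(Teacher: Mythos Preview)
The paper does not actually prove Theorem~\ref{t:oddcycledecomp}; it states the result and cites \cite{FHVT}. So there is no in-paper argument to compare against, and I assess your proposal on its own merits.

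Your argument for both inclusions is correct and clean. The forward inclusion via pigeonhole on an odd $(2\ell-1)$-cycle is exactly right, and the reverse inclusion---truncate exponents at $2$, partition $V=A\sqcup B\sqcup Z$, show $G_B$ is bipartite using the odd-cycle components, then split $B=B_1\sqcup B_2$ and take $u=A\cup B_1$, $v=A\cup B_2$---is a nice direct proof. The key step, that a shortest odd cycle in $G_B$ is chordless in $G_B$ and hence (since $G_B$ is a full induced subgraph) also chordless in $G$, is correctly identified and justified.

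There is, however, a genuine slip in your irredundancy argument for the edge components. To show that $\mathfrak{p}_e^2$ (with $e=\{x_i,x_j\}$) cannot be dropped, you must exhibit a monomial lying in \emph{every other} component but \emph{not} in $\mathfrak{p}_e^2$. Your witness $x_ix_j$ does the opposite: it lies in $\mathfrak{p}_e^2$ (it is one of its generators) and in none of the others, which proves nothing about redundancy. The fix is easy: take $m_e=\prod_{k\neq i,j}x_k^2$. For any other edge $e'$ at least one endpoint lies outside $\{i,j\}$, so $m_e\in\mathfrak{p}_{e'}^2$; every induced odd cycle has at least three vertices, hence one outside $\{i,j\}$, so $m_e\in\mathfrak{p}_W^{\langle 2\rangle}$; but $m_e\notin\mathfrak{p}_e^2$ since its $x_i$- and $x_j$-exponents are both zero. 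Your witness $\prod_{k\notin W}x_k^2\prod_{k\in W}x_k$ for the odd-cycle components is correct as stated.
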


\begin{corollary}\label{t:oddcycleass} Let $G$ be a graph.  Then we
  have
\[
\Ass(S/J^{2})=\left\{\mathfrak{p}_{e}:e\in E(G)\right\} \cup
\left\{\mathfrak{p}_{W}:G_{W} \text{ is an induced odd cycle}\right\}.
\]
\end{corollary}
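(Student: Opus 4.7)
The plan is to deduce the corollary directly from Theorem \ref{t:oddcycledecomp} using the standard fact that, given any irredundant primary decomposition $I=\bigcap Q_i$ of an ideal $I$, one has $\Ass(S/I)=\{\sqrt{Q_i}\}$. Applied to the decomposition
\[
J^{2}=\left[\bigcap_{e\in E(G)}\mathfrak{p}_{e}^{2}\right] \cap \left[\bigcap_{G_{W}\text{ is an induced odd cycle}}\mathfrak{p}_{W}^{\langle 2\rangle}\right]
\]
provided by that theorem, all that remains is to compute the radical of each primary component and verify that the resulting primes are pairwise distinct, so that the irredundancy of the decomposition passes on to the irredundancy of the list of radicals.

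For the first task, $\mathfrak{p}_e$ is prime, so $\sqrt{\mathfrak{p}_e^{2}}=\mathfrak{p}_e$; and since $\mathfrak{p}_W^{\langle 2\rangle}=(x_i^2 : x_i \in W)$, its radical is $\mathfrak{p}_W=(x_i : x_i\in W)$. For the second task, the $\mathfrak{p}_e$ all have height $2$ while the $\mathfrak{p}_W$ attached to induced odd cycles have height $|W|\geq 3$, so no edge-prime coincides with any cycle-prime; within each family the prime is determined by its set of generators ($e$ or $W$, respectively), so distinct edges (resp.\ distinct induced odd cycles) give distinct primes. Consequently each radical arises from exactly one primary component, and
\[
\Ass(S/J^{2})=\{\mathfrak{p}_e : e\in E(G)\}\cup\{\mathfrak{p}_W : G_W \text{ is an induced odd cycle}\}.
\]
There is no real obstacle beyond invoking Theorem \ref{t:oddcycledecomp}; the entire content of the corollary is already encoded in that primary decomposition, and the remaining work is essentially bookkeeping.
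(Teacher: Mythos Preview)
Your proof is correct and follows exactly the approach the paper intends: the corollary is stated without proof in the paper, as an immediate consequence of Theorem~\ref{t:oddcycledecomp}, and your argument---reading off the radicals of the primary components and checking they are pairwise distinct---is precisely the routine verification that makes this immediate. There is nothing to add.
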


Corollary \ref{t:oddcycleass} and Theorem \ref{simisulrich} are also connected via work of Sturmfels and Sullivant \cite{SS}, who show that generalized Alexander duality connects the secant powers of an ideal with the powers of its dual.

\begin{runningexample}
We have $\Ass(S/J^{2})=E(G)\cup\{(a,b,c,d,e)\}$.  The prime $(a,b,c,d,e)$ appears here because $abcde$ is an odd induced cycle of $G$.  The even induced cycle $defg$ does not appear in $\Ass(S/J^{2})$, nor does the odd circuit $abcdgfe$, which is not induced.  Furthermore, per Theorem \ref{simisulrich}, $I^{\{2\}}$ is generated by the odd cycle $abcde$.
\end{runningexample}

Theorem \ref{t:oddcycledecomp} and Corollary \ref{t:oddcycleass} tell us that the odd cycles of a graph $G$ exactly
describe the difference between the symbolic square and ordinary square
of its cover ideal $J(G)$.  It is natural to ask about hypergraph-theoretic interpretations of
the differences between higher symbolic and ordinary
powers of $J(G)$, and of the differences between these powers for the edge ideal $I(G)$.  The answer to the former question involves \emph{critical hypergraphs},
discussed in \S\ref{s:higherpowers}.
The
latter question is closely related to a problem in combinatorial
optimization theory.  We describe this relationship in \S \ref{s:packing}.

%\begin{remark} \label{simisulrich}
%Simis and Ulrich prove that $I(G)^{\{2\}}$ is generated by monomials
%corresponding to odd induced cycles of $G$ \cite{SU}. This result and
%Corollary~\ref{t:oddcycleass} are connected via work of Sturmfels and
%Sullivant \cite{SS} and Alexander duality. \textbf{(JAM:  We can prove
%  this easily.  Should we move this forward a page and do so?)}
%\end{remark}

The importance of detecting odd induced cycles in a graph is apparent
in the Strong Perfect Graph Theorem, proven by Chudnovsky, Robertson,
Seymour, and Thomas in \cite{CRST} after the conjecture had been open for
over 40 years. A graph $G$ is \emph{perfect} if for each induced
subgraph $H$ of $G$, the
chromatic number $\chi(H)$ equals the clique number $\omega(H)$, where
$\omega(H)$ is the number of
vertices in the largest clique (i.e., complete subgraph) appearing in
$H$. Perfect graphs are an especially important class of graphs, and
they have a relatively simple characterization. Call any odd cycle of
at least five vertices an \emph{odd hole}, and define an \emph{odd
  antihole} to be the complement of an odd hole. 
%\textbf{(JAM:  I want to define complement here, but can't find a non-intrusive way to do so.)}

\begin{theorem}[Strong Perfect Graph Theorem] \label{SPGT}
A graph is perfect if and only if it contains no odd holes or odd antiholes.
\end{theorem}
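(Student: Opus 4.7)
The easy direction is a short argument. Suppose $G$ contains an odd hole $C_{2\ell+1}$ with $\ell \ge 2$ as an induced subgraph. Then $\chi(C_{2\ell+1})=3$ while $\omega(C_{2\ell+1})=2$, so $G$ has an induced subgraph which is not perfect and hence is itself not perfect. The case of an odd antihole is handled by invoking the Lov\'asz Perfect Graph Theorem (the weak version), which asserts that $G$ is perfect if and only if its complement $\overline{G}$ is perfect; since an odd antihole is an induced subgraph whose complement is an odd hole, the same argument applies after complementation. So the necessity direction amounts to computing $\chi$ and $\omega$ for a single family of graphs, plus one invocation of a classical theorem.

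The hard direction is the substance of the theorem: if $G$ is a \emph{Berge graph} (no odd hole, no odd antihole, as an induced subgraph), then $G$ is perfect. The plan is to prove this by induction on $|V(G)|$, showing that every Berge graph either belongs to an explicit \emph{basic class} of graphs that are a priori perfect, or else admits a structural \emph{decomposition} along which perfection can be lifted. The basic classes one needs are bipartite graphs, complements of bipartite graphs, line graphs of bipartite graphs, complements of line graphs of bipartite graphs, and the so-called double split graphs; each is known to be perfect by direct inspection or by K\"onig's theorem. The decompositions one has to handle are the $2$-join and its complement, the balanced skew partition, and the homogeneous pair; for each, one argues that if $G$ is a minimum imperfect Berge graph then $G$ admits no such decomposition (so-called ``minimum counterexamples are indecomposable''), which forces $G$ to be basic and hence perfect, a contradiction.

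The main obstacle, and essentially the entire content of the theorem, is proving the structure theorem itself: every Berge graph is either basic or admits one of the above decompositions. This is genuinely hard structural graph theory, proceeding by a long case analysis based on which small configurations (stretchers, appendices, long prisms, near-prisms, etc.) the graph contains. One organizes the cases by whether $G$ contains a suitable induced subgraph playing the role of an ``anchor,'' and in each case carefully traces how the Berge property constrains the attachment of the rest of $G$. Proving that minimum imperfect Berge graphs survive none of the decompositions is itself nontrivial and requires separate arguments tailored to each decomposition type (for the balanced skew partition, in particular, this is a theorem of Chudnovsky).

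I would not attempt to write out this program in detail here: the actual proof of Chudnovsky, Robertson, Seymour, and Thomas runs to roughly 150 pages, and the result is invoked in this survey precisely because the algebraic characterizations of perfect graphs described in Section~\ref{s:higherpowers} aim to give an independent route that bypasses the structure theorem entirely. So my ``proof proposal'' is really just the recognition that a direct attack requires the full Chudnovsky--Robertson--Seymour--Thomas machinery, and that the value of the surrounding algebraic approach is precisely to sidestep it.
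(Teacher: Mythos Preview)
The paper does not prove this theorem; it merely states it and attributes the proof to Chudnovsky, Robertson, Seymour, and Thomas \cite{CRST}. Your proposal correctly recognizes this and gives a reasonable high-level outline of the CRST structure-theorem strategy, together with the easy direction.

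One small correction to your closing remarks: the algebraic characterizations in Theorem~\ref{perfectgraphs} are proved \emph{without invoking} the Strong Perfect Graph Theorem, but they do not themselves constitute an alternative proof of it. They give equivalent conditions for perfection (in terms of associated primes of powers of the cover ideal) whose equivalence with the definition of perfection is established directly; they do not show that Berge graphs are perfect. Indeed, the paper explicitly \emph{uses} the Strong Perfect Graph Theorem, in combination with Corollary~\ref{t:oddcycleass}, to derive the statement that $G$ is perfect if and only if neither $S/J(G)^{2}$ nor $S/J(G^{c})^{2}$ has an associated prime of height larger than three. So the algebraic machinery complements SPGT rather than bypassing it.
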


Let $G$ be a graph with complementary graph $G^c$ (that is, $G^c$ has the same vertex set as $G$ but the complementary set of edges). Let $J(G)$ be the cover ideal of $G$ and $J(G^c)$ be the cover ideal of $G^c$. Using the Strong Perfect Graph Theorem along with Corollary~\ref{t:oddcycleass}, we conclude that a graph $G$ is perfect if and only if neither $S/J(G)^2$ nor $S/J(G^c)^2$ has an associated prime of height larger than three. It is clear from the induced pentagon that the graph from Running Example~\ref{running} is imperfect; this is apparent algebraically from the fact that $(a,b,c,d,e)$ is associated to $R/J(G)^2$.

%%%%%%%%%%%%%%%%%%%%%%%%%%%%%%%%%%%%%%%%%%%%%%%%%%%%%%%%%%%%

\section{Associated primes and perfect graphs} \label{s:higherpowers}

Theorem \ref{t:oddcycledecomp} and Corollary \ref{t:oddcycleass} exhibit a
strong interplay between coloring properties of a graph and associated primes of
the square of its cover ideal. In this section, we explore the connection
between coloring properties of hypergraphs in general and associated primes of
higher powers of their cover ideals. We also specialize back to graphs and give
algebraic characterizations of perfect graphs.

\begin{definition}A \emph{critically $d$-chromatic hypergraph} is a hypergraph
  $G$ with $\chi(G) =d$ whose proper induced subgraphs all have
  smaller chromatic number; $G$ is also called a \emph{critical
    hypergraph}.
\end{definition}

The connection between critical hypergraphs and associated
primes begins with a theorem of Sturmfels and Sullivant on graphs that
generalizes naturally to hypergraphs. 

\begin{theorem} \label{ss-secant}
Let $G$ be a hypergraph with edge ideal $I$. Then the squarefree minimal generators of $I^{\{s\}}$ are the monomials $W$ such that $G_W$ is critically $(s+1)$-chromatic.
%Recall we abuse notation by setting a set equal to its product.
\end{theorem}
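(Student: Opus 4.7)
The plan is to reduce the theorem to an induced-subhypergraph version of Theorem~\ref{t:secantcoloring}, and then translate the minimality condition into the criticality condition.

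First, I would establish the following: for every squarefree monomial $W$, identified with its supporting set of variables, $W \in I(G)^{\{s\}}$ if and only if $G_W$ is not $s$-colorable. Unwinding the definition $I^{\{s\}} = S \cap (\widetilde{I} + (\{x_i - \phi_s(x_i)\}))$, one observes that $W \equiv \phi_s(W)$ modulo the ideal generated by the $x_i - \phi_s(x_i)$, and that $\phi_s(W)$ already lies in $T_s$ (it involves no $x_i$). Applying the retraction $T \to T_s$ sending $x_i \mapsto \phi_s(x_i)$—which kills the relation ideal, carries $\widetilde{I}$ into $\widetilde{I}$, and fixes $\phi_s(W)$—shows that $W \in I^{\{s\}}$ if and only if $\phi_s(W) \in \widetilde{I}$. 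Now $\phi_s(W) = \prod_{x_i \in W}(y_{1,i}+\cdots+y_{s,i})$ expands as a sum of distinct monomials, each with coefficient $1$, one per assignment $c : W \to \{1,\ldots,s\}$. Since $\widetilde{I}$ is a monomial ideal, $\phi_s(W) \in \widetilde{I}$ iff every summand is, iff every assignment $c$ produces a monochromatic edge of $G$; since $c$ only colors vertices of $W$, that edge must lie in $G_W$. Hence $W \in I(G)^{\{s\}}$ iff $G_W$ admits no proper $s$-coloring, which is the induced analogue of Lemma~\ref{l:secantcoloringlemma}.

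For the second step, any divisor of a squarefree monomial is itself squarefree, so a squarefree $W$ is a minimal generator of $I(G)^{\{s\}}$ iff $W \in I(G)^{\{s\}}$ while no proper (squarefree) divisor $W' \mid W$ is. By the first step this translates to: $G_W$ is not $s$-colorable, but every proper induced subhypergraph $G_{W'}$ is $s$-colorable. For any $v \in W$, an $s$-coloring of $G_{W \smallsetminus \{v\}}$ extends to an $(s+1)$-coloring of $G_W$ by giving $v$ a fresh color, so $\chi(G_W) \leq s+1$; combined with $\chi(G_W) \geq s+1$ this yields $\chi(G_W) = s+1$. The condition that every proper induced subhypergraph has chromatic number $\leq s < \chi(G_W)$ is exactly criticality, so $G_W$ is critically $(s+1)$-chromatic, and the converse direction is immediate.

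The main obstacle is the reduction in the first step, specifically the verification that $W \in \widetilde{I} + (\{x_i - \phi_s(x_i)\})$ forces $\phi_s(W) \in \widetilde{I}$ on its own, rather than merely back into the sum; the retraction $x_i \mapsto \phi_s(x_i)$ dispatches this cleanly but needs to be written out carefully. Beyond that, the argument is a direct translation between monomial-ideal membership and induced-subhypergraph chromatic properties, generalizing the sketch of Theorem~\ref{simisulrich} by replacing ``odd induced cycle'' with ``critically $(s+1)$-chromatic hypergraph.''
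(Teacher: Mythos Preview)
The paper does not supply its own proof of this theorem; it is stated as a result of Sturmfels and Sullivant and used as input for the rest of Section~\ref{s:higherpowers}. Your argument is the natural one and directly generalizes the paper's sketch of Theorem~\ref{simisulrich} (the case $s=2$ for graphs), so in spirit you are doing exactly what the paper would do if it had included a proof: establish the induced-subhypergraph version of Lemma~\ref{l:secantcoloringlemma}, then read off criticality from minimality.

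One point you use without justification deserves a sentence: you characterize a squarefree $W$ as a minimal generator of $I^{\{s\}}$ by the condition that no proper monomial divisor lies in $I^{\{s\}}$, which is only valid once you know $I^{\{s\}}$ is itself a monomial ideal. This is true, and in fact your retraction argument already contains the germ of a proof: for any $f=\sum a_m m\in S$ one has $f\in I^{\{s\}}$ iff $\phi_s(f)\in\widetilde I$, and the monomials appearing in $\phi_s(m)$ for distinct $m$ are disjoint (the multidegree in the $y_{\bullet,i}$ recovers the exponent of $x_i$), so membership of $f$ reduces to membership of each $m$. Stating this explicitly would close the gap. Everything else in your outline is correct.
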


Higher powers of the cover ideal $J = J(G)$ of a hypergraph have more complicated structure than the square.
%Because $J^{s} \subseteq J^{(s)}$, the primes corresponding to edges of the graph $G$ are associated to $S/J^s$ for all $s \ge 1$. Additionally,
It is known that the primes associated to $S/J^2$ persist as associated primes of all $S/J^s$ for $s \ge 2$ \cite[Corollary 4.7]{FHVTperfect}. As one might expect from the case of $J^{2}$, if $H$ is a critically $(d+1)$-chromatic induced subhypergraph of $G$, then $\mathfrak{p}_H \in \Ass(S/J^{d})$ but $\mathfrak{p}_H \notin \Ass(S/J^{e})$ for any $e < d$. However, the following example from \cite{FHVTperfect} illustrates that other associated primes may arise as well.

\begin{example} \label{newkindofprimes}
Let $G$ be the graph with vertices $\{x_1,\dots,x_6\}$ and
edges \[x_1x_2,x_2x_3,x_3x_4,x_4x_5,x_5x_1,x_3x_6,x_4x_6,x_5x_6,\]
where we have abused notation by writing edges as monomials. Thus $G$
is a five-cycle on $\{x_1,\dots,x_5\}$ with an extra vertex $x_6$
joined to $x_3$, $x_4$, and $x_5$. Let $J$ be the cover ideal of
$G$. The maximal ideal $\mathfrak{m}=(x_1,\dots,x_6)$ is associated to
$S/J^3$ but to neither $S/J$ nor $S/J^2$. However, $G$ is not a critically $4$-chromatic graph; instead, $\chi(G)=3$.
\end{example}

Consequently, the critical induced subhypergraphs of a hypergraph $G$
may not detect all associated primes of $S/J^s$. Fortunately, there is
a related hypergraph whose critical induced subhypergraphs do yield a complete list of associated primes. We define the expansion of a hypergraph, the crucial tool.

\begin{definition}\label{d.expansion}
Let $G$ be a hypergraph with vertices $V=\{x_1,\dots,x_n\}$ and edges
$E$, and let $s$ be a positive integer. We create a new hypergraph
$G^s$, called \emph{the $s^{\text{th}}$ expansion of $G$}, as follows. 
We create vertex sets $V_{1}=\{x_{1,1},\dots, x_{n,1}\}$, \dots, $V_{s}=\{x_{1,s},\dots, x_{n,s}\}$.  (We think of these vertex sets as having distinct flavors.  In the literature, the different flavors $x_{i,j}$ of a vertex $x_{i}$ are sometimes referred to as its \emph{shadows}.)  The edges of $G^{s}$ consist of all
edges $x_{i,j}x_{i,k}$ connecting all differently flavored versions of the same vertex, and all edges arising from possible assignments of flavors to the vertices in an edge of $G$.

We refer to the map sending all flavors $x_{i,j}$ of a vertex $x_{i}$ back to $x_{i}$ as \emph{depolarization}, by analogy with the algebraic process of polarization. 
\end{definition}

%\begin{definition} \label{d.expansion}\textbf{(JAM:  Should we try to
%    say this with colors, by analogy to the discussion after 2.3?)}
%Let $G$ be a hypergraph with vertices $V=\{x_1,\dots,x_n\}$ and edges
%$E$, and let $s$ be a positive integer. We create a new hypergraph
%$G^s$, called \emph{the $s^{\text{th}}$ expansion of $G$}. We form the vertex set of $G^s$ by replacing each $x_i$ with vertices $x_{i,1}, \dots$, $x_{i,s}$, and we call each of these vertices \emph{shadows} of $x_i$. In analogy with the algebraic process of polarization, we call setting each $x_{i,t}$ equal to $x_i$ \emph{depolarization}. The edge set of $G^s$ consists of two types of edges. First, we connect each pair of shadows of the same variable, so we have edges $x_{i,j}x_{i,\ell}$ for all $1 \le j < \ell \le s$. Second, for each edge $x_{i_{1}}\cdots x_{i_{r}}$ of $G$, we have $s^r$ edges $x_{i_1,j_1}\cdots x_{i_r,j_r}$ of $G^s$, where $1 \le j_t \le s$ for each $t$.
%\end{definition}

\begin{example} \label{e.five}
Consider a five-cycle $G$ with vertices $x_1,\dots,x_5$. Then $G^2$ has vertex set $\{x_{1,1}, x_{1,2}, \dots, x_{5,1}, x_{5,2}\}$. Its edge set consists of edges $x_{1,1}x_{1,2}, \dots, x_{5,1}x_{5,2}$ as well as all edges $x_{i,j}x_{i+1,j'}$, where $1 \le j \le j' \le 2$, and the first index is taken modulo 5. Thus, for example, the edge $x_1x_2$ of $G$ yields the four edges $x_{1,1}x_{2,1}$, $x_{1,1}x_{2,2}$, $x_{1,2}x_{2,1}$, and $x_{1,2}x_{2,2}$ in $G^2$.
\begin{figure}[hbtf]
\includegraphics[height=1.5in]{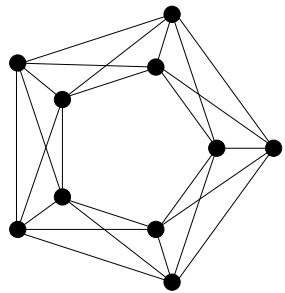}
\caption{The second expansion graph of a 5-cycle}\label{f:expansion}
\end{figure}
\end{example}

Our goal is to understand the minimal monomial generators of the generalized Alexander dual
$(J(G)^{s})^{[\mathbf{s}]}$, where $\mathbf{s}$ is the vector
$(s,\dots,s)$, one entry for each vertex of $G$. Under generalized
Alexander duality, these correspond to the ideals in an irredundant
irreducible decomposition of $J(G)^s$, yielding the associated primes of
$S/J(G)^s$. 

By generalized Alexander duality, Theorem~\ref{ss-secant} identifies the squarefree minimal monomial generators of $(J(G)^s)^{[\mathbf{s}]}$. Understanding the remaining monomial generators requires the following theorem \cite[Theorem 4.4]{FHVTperfect}. For a set of vertices $T$, write $\mathbf{m}_T$ to denote the product of the corresponding variables.

\begin{theorem}\label{t.expansion}
Let $G$ be a hypergraph with cover ideal $J = J(G)$, and let $s$ be a positive integer. Then \[(J^s)^{[\mathbf{s}]} = (\overline{\mathbf{m}_T} ~\big|~ \chi(G^s_T) > s )\]
where $\overline{\bm_T}$ is the depolarization of $\mathbf{m}_T$.
\end{theorem}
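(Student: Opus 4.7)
The plan is to apply generalized Alexander duality to convert the statement into a condition on the irreducible components of $J^s$, and then recognize those components through $s$-colorings of the expansion hypergraph. Fix a vector $\mathbf{c}$ with $0 \le c_i \le s$. Unwinding Definition~\ref{def.generalalexanderdual}, $\mathbf{x}^{\mathbf{c}} \in (J^s)^{[\mathbf{s}]}$ precisely when every minimal generator $\mathbf{x}^{\mathbf{b}}$ of $J^s$ has some coordinate $i$ with $b_i \ge 1$ and $c_i + b_i \ge s+1$. Since the minimal generators of $J^s$ have entries at most $s$, this is equivalent to $Q_{\mathbf{c}} \supseteq J^s$ for the irreducible ideal
\[
Q_{\mathbf{c}} \ := \ (x_i^{\,s+1-c_i} : c_i \ge 1).
\]
Because $J^s$ is generated as an ideal by products $w_1 \cdots w_s$ of minimal vertex covers of $G$, the containment $Q_{\mathbf{c}} \supseteq J^s$ fails exactly when there exist vertex covers $w_1, \dots, w_s$ of $G$ such that every vertex $x_i$ with $c_i \ge 1$ is absent from at least $c_i$ of the $w_j$.

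The central step is to show that such a collection of covers exists iff $G^s_T$ is $s$-colorable, where $T = T_{\mathbf{c}}$ is any subset of $V(G^s)$ containing exactly $c_i$ shadows of each $x_i$ (the particular choice is immaterial, since permuting flavors is an automorphism of $G^s$). Given covers $w_1, \dots, w_s$ with $|A_i| \ge c_i$ for $A_i := \{c : x_i \notin w_c\}$, I fix an injection $\tau_i : \{1, \dots, c_i\} \to A_i$ for each $i$ with $c_i \ge 1$ and color the shadow $x_{i,j} \in T$ by $\tau_i(j)$. The flavor edges of $G^s_T$ are proper by injectivity of $\tau_i$, and if a flavored copy of a $G$-edge $e$ were monochromatic of color $c$, then every vertex of $e$ would lie outside $w_c$, contradicting that $w_c$ covers $e$. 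Conversely, given an $s$-coloring $\phi$ of $G^s_T$, set
\[
w_c \ := \ \{ x_i \in V(G) : c_i = 0 \text{ or } c \notin \phi(\{x_{i,j} : j \le c_i\}) \}.
\]
The flavor edges of $G^s$ force $\phi$ to be injective on the shadows of each $x_i$, so the image $\phi(\{x_{i,j} : j \le c_i\})$ has size exactly $c_i$, and therefore $x_i$ is missing from exactly $c_i$ of the $w_c$ whenever $c_i \ge 1$. Properness of $\phi$ on flavored copies of each $G$-edge, combined with the a priori inclusion of every vertex having $c_i = 0$ in every $w_c$, ensures each $w_c$ is a vertex cover of $G$.

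Chaining these equivalences yields $\mathbf{x}^{\mathbf{c}} \in (J^s)^{[\mathbf{s}]}$ iff $\chi(G^s_{T_{\mathbf{c}}}) > s$. Since every minimal generator of $(J^s)^{[\mathbf{s}]}$ has entries in $\{0,1,\dots,s\}$, each is of the form $\overline{\mathbf{m}_{T_{\mathbf{c}}}} = \mathbf{x}^{\mathbf{c}}$, and conversely every $\overline{\mathbf{m}_T}$ with $\chi(G^s_T) > s$ belongs to $(J^s)^{[\mathbf{s}]}$, so the two ideals coincide. The main obstacle I anticipate is the careful bookkeeping around vertices $x_i$ with $c_i = 0$: such vertices have no shadow in $T$ and thus impose no coloring conditions on $\phi$, yet the covers $w_c$ extracted from $\phi$ must nevertheless cover every $G$-edge, including those meeting $\{x_i : c_i = 0\}$. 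Placing all such $x_i$ into every $w_c$ by default resolves this cleanly and, crucially, leaves undisturbed the count $|A_i| = c_i$ for vertices with $c_i \ge 1$, which is exactly the count needed to match the exponents dictated by generalized Alexander duality.
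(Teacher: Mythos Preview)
Your argument is correct and self-contained. The only minor points are that you implicitly use the paper's standing convention that edges have cardinality at least two (otherwise loops cause trouble on both sides), and that when you say ``flavor edges of $G^s$'' you mean the flavor edges that survive in $G^s_T$; neither affects the validity of the proof.

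Your route, however, is genuinely different from the one the paper sketches. The paper proceeds by first invoking Theorem~\ref{ss-secant} to identify the squarefree generators of $I(G^s)^{\{s\}}$ with the critically $(s+1)$-chromatic induced subhypergraphs of $G^s$, and then establishes separately that $(J^s)^{[\mathbf{s}]}$ is the depolarization of $I(G^s)^{\{s\}}$; this second step is where the paper says ``some effort'' is required and defers to \cite{FHVTperfect}. You bypass secant ideals entirely. Instead you read off directly from the definition of generalized Alexander duality that $\mathbf{x}^{\mathbf{c}}\in (J^s)^{[\mathbf{s}]}$ is an irreducible-component containment $J^s\subseteq Q_{\mathbf{c}}$, and then build an explicit dictionary between $s$-colorings of $G^s_{T_{\mathbf{c}}}$ and $s$-tuples of vertex covers with prescribed absence counts, in the spirit of the proof of Theorem~\ref{t:covercoloring}. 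What the paper's route buys is a structural explanation: it exhibits $(J^s)^{[\mathbf{s}]}$ as a depolarized secant power, tying the result into the Sturmfels--Sullivant framework and making the appearance of critically chromatic subhypergraphs (and hence Corollary~\ref{c.allprimes}) immediate. What your route buys is elementariness and transparency: no secant machinery, no appeal to \cite{SS}, and the cover/coloring bijection is made completely explicit, including the bookkeeping for vertices with $c_i=0$ that you rightly flag as the delicate point.
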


The proof relies on a (hyper)graph-theoretic characterization of the generators of $I(G^s)^{\{s\}}$ from Theorem~\ref{ss-secant}. One then needs to prove that $(J^s)^{[\mathbf{s}]}$ is the depolarization of $I(G^s)^{\{s\}}$, which requires some effort; see \cite{FHVTperfect}.

Using Theorem~\ref{t.expansion}, we can identify all associated primes of $S/J(G)^s$ in terms of the expansion graph of $G$.

\begin{corollary} \label{c.allprimes}
Let $G$ be a hypergraph with cover ideal $J = J(G)$. Then $P=(x_{i_1},\dots,x_{i_r}) \in \Ass(S/J^s)$ if and only if there is a subset $T$ of the vertices of $G^s$ such that $G^s_T$ is critically $(s+1)$-chromatic, and $T$ contains at least one flavor of each variable in $P$ but no flavors of other variables.
%\[ \{x_{i_1,1},\ldots,x_{i_r,1}\} \subseteq T \subseteq \{x_{i_1,1},\ldots,x_{i_1,s},\ldots,x_{i_r,1},\ldots,x_{i_r,s}\},\] and $G^s_T$ is critically $(s+1)$-chromatic
\end{corollary}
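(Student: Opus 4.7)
The plan is to convert the statement about associated primes into a statement about minimal generators of the generalized Alexander dual, and then apply Theorem~\ref{t.expansion} to translate those generators into the language of colorings of the expansion graph. By Corollary~\ref{c:irreddecomp} and generalized Alexander duality, the associated primes of $S/J^s$ are precisely the radicals of the irreducible components of $J^s$, and these components correspond bijectively under $(\,\cdot\,)^{[\mathbf{s}]}$ to the minimal monomial generators of $(J^s)^{[\mathbf{s}]}$: a minimal generator $\mathbf{x}^\mathbf{b}$ with support $\{x_{i_1},\dots,x_{i_r}\}$ produces the irreducible component $(x_{i_1}^{s+1-b_{i_1}},\dots,x_{i_r}^{s+1-b_{i_r}})$, whose associated prime is $P$. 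Hence $P\in\Ass(S/J^s)$ if and only if $(J^s)^{[\mathbf{s}]}$ has a minimal generator whose support is exactly $\{x_{i_1},\dots,x_{i_r}\}$.

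By Theorem~\ref{t.expansion}, every such minimal generator has the form $\overline{\mathbf{m}_T}$ for some $T\subseteq V(G^s)$ with $\chi(G^s_T)>s$, and the support of $\overline{\mathbf{m}_T}$ is exactly the set of variables of which $T$ contains some flavor; this already matches the flavor condition in the corollary. So the proof reduces to the equivalence: $\overline{\mathbf{m}_T}$ is a minimal generator of $(J^s)^{[\mathbf{s}]}$ if and only if $G^s_T$ is critically $(s+1)$-chromatic. One direction uses two standard moves. First, since deleting a single vertex from any hypergraph decreases its chromatic number by at most one, if $\chi(G^s_T)\geq s+2$, then for each $v\in T$ we still have $\chi(G^s_{T\setminus\{v\}})>s$, so $\overline{\mathbf{m}_{T\setminus\{v\}}}\in(J^s)^{[\mathbf{s}]}$ is a proper divisor of $\overline{\mathbf{m}_T}$, contradicting minimality; hence $\chi(G^s_T)=s+1$. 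Second, if some proper subset $T'\subsetneq T$ had $\chi(G^s_{T'})>s$, then $\overline{\mathbf{m}_{T'}}$ would likewise be a proper divisor in $(J^s)^{[\mathbf{s}]}$, so all proper induced subhypergraphs of $G^s_T$ are $s$-colorable.

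For the converse, assume $G^s_T$ is critically $(s+1)$-chromatic and suppose some $\overline{\mathbf{m}_{T'}}\in(J^s)^{[\mathbf{s}]}$ properly divides $\overline{\mathbf{m}_T}$. The main obstacle is that depolarization is many-to-one: $T'$ need not sit inside $T$; one only knows that the variable-wise flavor counts of $T'$ are dominated by those of $T$, with strict inequality in at least one coordinate. To handle this, I would exploit a crucial symmetry of Definition~\ref{d.expansion}: independently permuting the flavors within each variable class is an automorphism of $G^s$, because the edge set is closed under arbitrary reassignments of flavors within each class. Applying such an automorphism, one replaces $T'$ by an isomorphic copy $T''\subseteq T$ with the same flavor counts, so that $\chi(G^s_{T''})=\chi(G^s_{T'})>s$; this contradicts the criticality of $G^s_T$ and finishes the proof.

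This flavor-symmetry step is the only part of the argument requiring genuine care — everything else is a transparent translation through generalized Alexander duality and Theorem~\ref{t.expansion}. In particular, without this symmetry one could not conclude minimality of the depolarized monomial from criticality of a single preimage $T$.
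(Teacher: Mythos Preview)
Your proof is correct and follows essentially the same route as the paper's outline: pass through generalized Alexander duality to identify $\Ass(S/J^s)$ with the supports of minimal generators of $(J^s)^{[\mathbf{s}]}$, then invoke Theorem~\ref{t.expansion} to translate these into chromatic data for $G^s$. Your flavor-permutation argument in the converse direction makes explicit a step the paper's sketch merely asserts, and it is exactly the right way to handle the many-to-one nature of depolarization.
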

%\textbf{(JAM: We can clean up the displayed line.  For example, in the
%  coloring framework I like, T contains at least one color of each of
%  the vertices of $P$, and no colors of any other vertices.)}

We outline the rough idea of the proof. If $P \in \Ass(S/J^s)$, then
$(x_{i_1}^{e_{i_1}},\dots,x_{i_r}^{e_{i_r}})$ is an 
irreducible component of $J^s$, for some $e_{i_j} > 0$. This yields a
corresponding minimal generator of $(J^s)^{[\mathbf{s}]}$, which gives
a subset $W$ of the vertices of $G^s$ such that $G^s_W$ is critically
$(s+1)$-chromatic, and %moreover,
$W$ depolarizes to $x_{i_1}^{e_{i_{1}}} \dots x_{i_r}^{e_{i_{r}}}$. 
%Trading later-indexed flavors 
%%shadows 
%for earlier-indexed ones produces the set $T$. 
Conversely, given a critically $(s+1)$-chromatic expansion hypergraph $G^s_T$, we get a minimal generator of $(J^s)^{[\mathbf{s}]}$ of the form $x_{i_1}^{e_{i_1}} \cdots x_{i_r}^{e_{i_r}}$, where $1 \le e_{i_j} \le s$ for all $i_j$. Duality produces an irreducible component of $J^s$ with radical $P$.

Corollary~\ref{c.allprimes} explains why $\mathfrak{m} \in \Ass(S/J^3)$ in Example~\ref{newkindofprimes}. Let $T$ be the set of vertices \[T = \{x_{1,1},x_{2,1},x_{2,2},x_{3,1},x_{4,1},x_{5,1},x_{6,1}\},\] a subset of the vertices of $G^3$. Then $G^3_T$ is critically 4-chromatic.

As a consequence of this work, after specializing to graphs, we get two algebraic characterizations of perfect graphs that are independent of the Strong Perfect Graph Theorem. First, we define a property that few ideals satisfy (see, e.g., \cite{HostenThomas}).

\begin{definition}\label{d:saturated}
An ideal $I \subset S$ has the \emph{saturated chain property for associated primes}
if given any associated prime $P$ of $S/I$ that is not minimal, there exists an associated prime
$Q \subsetneq P$ with height$(Q)= $ height$(P)-1$.
\end{definition}
%\textbf{(JAM: Is this right? Q doesn't have to be above the minimal prime?)}
%\textbf{(CAF: I think it's OK. By the condition, Q is either minimal or above some other associated prime. I don't see why one needs additional restrictions on Q, but maybe I'm missing something. The original discussion is around Theorem 3.1 here: http://www.mrlonline.org/mrl/1999-006-001/1999-006-001-006.pdf. See also the start of the paper http://faculty1.coloradocollege.edu/~ataylor/papers/satprime.pdf)}

We can now characterize perfect graphs algebraically in two different ways \cite[Theorem 5.9]{FHVTperfect}. The key point is that for perfect graphs, the associated primes of powers of the cover ideal correspond exactly to the cliques in the graph.

\begin{theorem} \label{perfectgraphs}
Let $G$ be a simple graph with cover ideal $J$. Then the following are equivalent:
\begin{enumerate}
\item $G$ is perfect.
%\item For all $s \ge 1$, $P=(x_{i_1}, \dots, x_{i_r}) \in \Ass(R/J^s)$ if and only if the induced graph on $\{x_{i_1},\dots,x_{i_r}\}$ is a clique of size $1 < r \le s+1$ in $G$
\item For all $s$ with $1 \le s < \chi(G)$, $P=(x_{i_1}, \dots, x_{i_r}) \in \Ass(R/J^s)$ if and only if
the induced graph on $\{x_{i_1},\dots,x_{i_r}\}$ is a clique of size $1 < r \le s+1$ in $G$.
\item For all $s \geq 1$, $J^s$ has the saturated chain property for associated primes.
\end{enumerate}
\end{theorem}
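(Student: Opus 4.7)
The plan is to prove the cyclic implications $(1) \Rightarrow (2) \Rightarrow (1)$ and $(1) \Rightarrow (3) \Rightarrow (1)$, each built on Corollary~\ref{c.allprimes} together with two classical inputs that are independent of the Strong Perfect Graph Theorem: Lov\'asz's replication theorem, which says that substituting each vertex of a perfect graph by a clique preserves perfectness and hence that $G^s$ is perfect whenever $G$ is; and the elementary observation that in a perfect graph, a critically $k$-chromatic induced subgraph must equal $K_k$ (since $\chi = \omega = k$ produces a maximum clique, and any additional vertex could be deleted without lowering $\chi$, contradicting criticality).

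For $(1) \Rightarrow (2)$, suppose $P = (x_{i_1},\ldots,x_{i_r}) \in \Ass(S/J^s)$. Corollary~\ref{c.allprimes} furnishes $T \subseteq V(G^s)$ with $G^s_T$ critically $(s+1)$-chromatic and depolarizing onto $\{x_{i_1},\ldots,x_{i_r}\}$; perfectness of $G^s$ forces $G^s_T$ to be a clique of cardinality $s+1$, so by Definition~\ref{d.expansion} its underlying vertices form a clique of $G$ equipped with flavor multiplicities $1 \le e_j \le s$ summing to $s+1$, constraining $2 \le r \le s+1$. The converse is immediate: any $r$-clique with $2 \le r \le s+1$ admits such a flavor distribution, producing a suitable $T$. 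For $(2) \Rightarrow (1)$, if $G$ were not perfect, pick a minimal imperfect induced subgraph $H$; every proper induced subgraph of $H$ satisfies $\chi = \omega < \chi(H)$, making $H$ critically $\chi(H)$-chromatic but not a clique. Setting $s = \chi(H) - 1$ (so that $1 \le s < \chi(G)$) and taking $T$ to be one flavor of each vertex of $V(H)$, we have $G^s_T \cong H$, so Corollary~\ref{c.allprimes} yields $\mathfrak{p}_{V(H)} \in \Ass(S/J^s)$, contradicting (2).

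The argument of $(1) \Rightarrow (2)$ extends verbatim to all $s \ge 1$ (using $\omega(G) = \chi(G)$), so under (1) the associated primes of $S/J^s$ correspond to cliques of size $2 \le r \le \min(s+1,\chi(G))$. For $(1) \Rightarrow (3)$ it then suffices to note that every non-minimal associated prime arises from an $r$-clique with $r \ge 3$, and deleting any vertex of this clique produces a sub-clique whose prime is again associated and of height $r - 1$. Conversely, for $(3) \Rightarrow (1)$ we argue contrapositively: take $H$ minimal imperfect and $s = \chi(H) - 1$ as above, so $\mathfrak{p}_{V(H)} \in \Ass(S/J^s)$. A short degree argument (the minimum degree of a critically $k$-chromatic graph is at least $k - 1$, so on $k + 1$ vertices its complement is at most a matching; a direct check shows no such graph is critically $k$-chromatic aside from $K_k$ itself) yields $|V(H)| \ge \chi(H) + 2 = s + 3$. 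Any candidate associated prime $Q = \mathfrak{p}_{V(H-v)} \subsetneq \mathfrak{p}_{V(H)}$ of height $|V(H)| - 1$ would then require, via Corollary~\ref{c.allprimes} applied inside the perfect graph $H - v$, a clique $T \subseteq V((H-v)^s)$ of size $s+1$ depolarizing onto $V(H-v)$; but $|V(H-v)| \ge s + 2 > s + 1 = |T|$, so no such $T$ exists, and the saturated chain property fails at $\mathfrak{p}_{V(H)}$.

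The most delicate step is the dictionary in $(1) \Rightarrow (2)$ translating critically $(s+1)$-chromatic induced subgraphs of $G^s$ into flavor-weighted cliques of $G$; the short combinatorial lemma that a critically $k$-chromatic non-clique has at least $k + 2$ vertices is the one genuinely combinatorial input outside the expansion framework, and its failure at $k + 1$ vertices is exactly what enables the saturated chain obstruction in $(3) \Rightarrow (1)$.
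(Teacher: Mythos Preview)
Your argument is correct, and where it overlaps with the paper---namely $(1)\Rightarrow(2)$---it follows the same route: perfectness of $G$ passes to $G^{s}$ via Lov\'asz replication, so a critically $(s+1)$-chromatic induced subgraph of $G^{s}$ is forced to be $K_{s+1}$, and its depolarization is a clique of $G$. The paper only sketches this direction and refers the remaining implications to \cite{FHVTperfect}; you supply them in full. Your $(2)\Rightarrow(1)$ and $(3)\Rightarrow(1)$ both hinge on locating a minimal imperfect induced subgraph $H$ and recognizing it (via one flavor per vertex) as a critically $(s+1)$-chromatic subgraph of $G^{s}$ for $s=\chi(H)-1$, which is exactly the intended use of Corollary~\ref{c.allprimes}. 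The one genuinely extra ingredient you bring is the short lemma that a critically $k$-chromatic graph other than $K_{k}$ has at least $k+2$ vertices; this is what makes the height count in $(3)\Rightarrow(1)$ work, since it guarantees $|V(H)|-1 > s+1$ and hence that no $(s+1)$-clique in $(H-v)^{s}$ can depolarize onto all of $V(H-v)$. That lemma is elementary (the $K_{k+1}$-minus-an-edge check you indicate suffices), and it is precisely the kind of combinatorial fact the paper would expect the reader to supply or find in \cite{FHVTperfect}.
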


\begin{proof}
We sketch (1) implies (2) to give an idea of how expansion is used. Suppose $G$ is a perfect graph. A standard result in graph theory shows that $G^s$ is also perfect. Let $P \in \Ass(S/J^s)$, so $P$ corresponds to some subset $T$ of the vertices of $G^s$ such that $G^s_T$ is critically $(s+1)$-chromatic. Because $G^s$ is perfect, the clique number of $G^s_T$ is also $s+1$, meaning there exists a subset $T'$ of $T$ such that $G^s_{T'}$ is a clique with $s+1$ vertices. Thus $G^s_{T'}$ is also a critically $(s+1)$-chromatic graph contained inside $G^s_T$, forcing $T=T'$. Hence $G^s_T$ is a clique, and the support of the depolarization of $\bar{\mathbf{m}_T}$ is a clique with at most $s+1$ vertices. Therefore $G_P$ is a clique.
\end{proof}

\begin{remark}
If $J$ is the cover ideal of a perfect graph, its powers satisfy a condition stronger than that of Definition~\ref{d:saturated}. If $P \in \Ass(S/J^s)$, and $Q$ is any monomial prime of height at least two contained in $P$, then $Q \in \Ass(S/J^s)$. This follows from the fact that $P$ corresponds to a clique in the graph.
\end{remark}

Theorem~\ref{perfectgraphs} provides information about two classical issues surrounding associated primes of powers of ideals. Brodmann proved that for any ideal $J$, the set of associated primes of $S/J^s$ stabilizes \cite{Brodmann}. However, there are few good bounds in the literature for the power at which this stabilization occurs. When $J$ is the cover ideal of a perfect graph, Theorem~\ref{perfectgraphs} demonstrates that stabilization occurs at $\chi(G)-1$. Moreover, though in general associated primes may disappear and reappear as the power on $J$ increases (see, e.g., \cite{BHH,HH} and also \cite[Example 4.18]{MV}), when $J$ is the cover ideal of a perfect graph, we have $\Ass(S/J^s) \subseteq \Ass(S/J^{s+1})$ for all $s \ge 1$. In this case, we say that $J$ has the \emph{persistence property for associated primes}, or simply the \emph{persistence property}. Morey and Villarreal give an alternate proof of the persistence property for cover ideals of perfect graphs in \cite[Example 4.21]{MV}.

While there are examples of arbitrary monomial ideals for which persistence fails, we know of no such examples of \emph{squarefree} monomial ideals. Francisco, H\`a, and Van Tuyl (see \cite{FHVT, FHVT2}) have asked:

\begin{question} \label{sfpersistence}
Suppose $J$ is a squarefree monomial ideal. Is $\Ass(S/J^s) \subseteq \Ass(S/J^{s+1})$ for all $s \ge 1$?
\end{question}

While Question~\ref{sfpersistence} has a positive answer when $J$ is
the cover ideal of a perfect graph, little is known for cover ideals
of imperfect graphs. Francisco, H\`a, and Van Tuyl answer
Question~\ref{sfpersistence} affirmatively for odd holes and odd
antiholes in \cite{FHVT2}, but we are not aware of any other imperfect
graphs whose cover ideals are known to have this persistence property. One possible approach is to exploit the machinery of expansion again. Let $G$ be a graph, and let $x_i$ be a vertex of $G$. Form the expansion of $G$ at $\{x_i\}$ by replacing $x_i$ with two vertices $x_{i,1}$ and $x_{i,2}$, joining them with an edge. For each edge $\{v,x_i\}$ of $G$, create edges $\{v,x_{i,1}\}$ and $\{v,x_{i,2}\}$. If $W$ is any subset of the vertices of $G$, form $G[W]$ by expanding all the vertices of $W$. Francisco, H\`a, and Van Tuyl conjecture:

\begin{conjecture} \label{c.expansion}
Let $G$ be a graph that is critically $s$-chromatic. Then there exists a subset $W$ of the vertices of $G$ such that $G[W]$ is critically $(s+1)$-chromatic.
\end{conjecture}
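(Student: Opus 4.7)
The plan is to construct the required $W$ by starting from the full expansion $W_{0} = V(G)$ and trimming down, then showing that a minimal witness inside the resulting family is already critical. The key inputs will be (a) a lower bound $\chi(G[V(G)]) \geq s+1$, giving slack to trim, and (b) a minimality argument inside the class of partial expansions of $G$.

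First, I would observe that $G[V(G)]$ is exactly the lexicographic product $G \cdot K_{2}$, whose chromatic number equals the $2$-fold chromatic number $\chi_{2}(G)$. Indeed, a proper $k$-coloring of $G[V(G)]$ assigns each $v \in V(G)$ a $2$-set $c(v) = \{c(v_{1}), c(v_{2})\}$, and adjacent pairs of vertices of $G$ yield disjoint $2$-sets. I claim that $\chi_{2}(G) \geq s+1$. If instead $\chi_{2}(G) \leq s$, then for each $v$ the set $c(v) \setminus \{1\}$ is nonempty and, by disjointness of the $c$-sets on edges, the sets $c(v) \setminus \{1\}$ and $c(w) \setminus \{1\}$ remain disjoint for every edge $vw$. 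Picking any element from each yields a proper coloring of $G$ in the $s-1$ colors $\{2, \ldots, s\}$, contradicting $\chi(G) = s$. Hence $\chi(G[V(G)]) \geq s+1$.

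Next I would trim. Let $\mathcal{F}$ be the family of induced subgraphs $H'$ of $G[V(G)]$ that (i) contain at least one flavor of every vertex of $G$ and (ii) satisfy $\chi(H') \geq s+1$. This family is nonempty since $G[V(G)] \in \mathcal{F}$; choose $H' \in \mathcal{F}$ of minimum vertex count. Condition (i) forces $H' = G[W]$ for $W = \{v \in V(G) : v_{1}, v_{2} \in V(H')\}$. I would then argue that $\chi(H') = s+1$: if $\chi(H') \geq s+2$, then either some vertex of $G$ has both flavors in $H'$ (and deleting one gives a smaller member of $\mathcal{F}$, a contradiction), or every vertex has a unique flavor in $H'$ so that $H' \cong G$ has $\chi(H') = s$, again a contradiction.

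The final and hardest step is arguing that $H'$ is critical. If some $u \in V(H')$ has $\chi(H' - u) \geq s+1$ and is not the unique flavor of its underlying vertex, then $H' - u \in \mathcal{F}$, contradicting minimality. The main obstacle is the remaining case, where $u = v_{i}$ is the only flavor of some $v \notin W$; then $H' - u$ fails the cover condition and escapes $\mathcal{F}$, so minimality gives no contradiction. In terms of the original data, this case amounts to $(G - v)[W]$ having chromatic number $s+1$ even though $G - v$ is only $(s-1)$-chromatic. I expect this to be the crux of the proof. Overcoming it will require either (a) exploiting the twin automorphism $v_{1} \leftrightarrow v_{2}$ of $G \cdot K_{2}$ together with a simultaneous add-and-remove move that keeps the vertex count constant while forcing progress toward criticality, or (b) abandoning the minimality search in favor of an explicit construction of $W$ from the color-critical structure of $G$ — for example, reading $W$ off from the precise way optimal $(s-1)$-colorings of $G - v$ fail to extend across $v$. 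Via Corollary~\ref{c.allprimes}, this final step is closely tied to persistence for cover ideals of imperfect graphs (Question~\ref{sfpersistence}), so a clean resolution may require ideas beyond the minimality argument above.
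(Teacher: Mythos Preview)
The statement you are attempting to prove is presented in the paper as an open \emph{conjecture}, not a theorem: the authors offer no proof, and they introduce it precisely because establishing it would imply the persistence property for cover ideals of all graphs (see the paragraph following Conjecture~\ref{c.expansion}). There is therefore no proof in the paper to compare your attempt against.

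That said, your analysis is sound as far as it goes. The bound $\chi(G[V(G)])=\chi_{2}(G)\geq \chi(G)+1$ is correct and standard, and your minimality argument legitimately produces some $G[W]$ with $\chi(G[W])=s+1$ in which deleting any \emph{duplicated} flavor drops the chromatic number. You have also correctly isolated the genuine obstruction: nothing in the minimality set-up prevents a single-flavor vertex $v\notin W$ from being non-critical in $H'$, i.e., nothing rules out $\chi\bigl((G-v)[W]\bigr)=s+1$. This is exactly the difficulty that keeps the statement conjectural, and neither of your proposed remedies---the twin swap or an explicit construction of $W$ from the color-critical structure of $G$---is known to close the gap. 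In short, your proposal is not a proof but an accurate diagnosis of where a proof would have to do real work, which is entirely consistent with the paper's treatment of the statement as an open problem.
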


In \cite{FHVT2}, Francisco, H\`a, and Van Tuyl prove that if
Conjecture~\ref{c.expansion} is true for all $s \ge 1$, then all cover
ideals of graphs have the persistence property. One can also state a
hypergraph version of Conjecture~\ref{c.expansion}; if true, it
would imply persistence of associated primes for all squarefree
monomial ideals.

Finally, in \cite{MV}, Morey and Villarreal prove persistence for edge
ideals $I$ of any graphs containing a leaf (a vertex of degree 1). Their proof passes to the associated graded ring, and the vital step is identifying a regular element of the associated graded ring in $I/I^2$. Morey and Villarreal remark that attempts to prove persistence results for more general squarefree monomial ideals lead naturally to questions related to the Conforti-Cornu\'ejols conjecture, discussed in the following section.

%%%%%%%%%%%%%%%%%%%%%%%%%%%%%%%%%%%%%%%%%%%%%%%%%%%%%%%%%%%%%%%%

\section{Equality of symbolic and ordinary powers and linear programming} \label{s:packing}

We have seen in the last section that comparing symbolic and ordinary powers of the cover ideal of a hypergraph allows us to study structures and coloring properties of the hypergraph. In this section, we address the question of when symbolic and ordinary powers of a squarefree monomial ideal are the same, and explore an algebraic approach to a long-standing conjecture in linear integer programming, the Conforti-Cornu\'ejols conjecture. 
In what follows, we state the Conforti-Cornu\'ejols conjecture in its original form,
describe how to translate the conjecture into algebraic language, and discuss
its algebraic reformulation and related problems.

The Conforti-Cornu\'ejols conjecture states the equivalence between the packing
and the max-flow-min-cut properties for \emph{clutters} which, as noted before, are essentially simple
hypergraphs. 

As before, $G = (V,E)$ denotes a hypergraph with $n$ vertices $V =
\{x_1, \dots, x_n\}$ and $m$ edges $E = \{e_1, \dots, e_m\}$. Let $A$ 
%(an $n\times m$ matrix) 
be the \emph{incidence matrix} of $G$, i.e., the $(i,j)$-entry
of $A$ is 1 if the vertex $x_i$ belongs to the edge $e_j$, and 0 otherwise. For
a nonnegative integral vector $\c \in \ZZ_{\ge 0}^n$, consider
the following
dual linear programming system
\begin{align}
\max \{\langle \1, \y\rangle ~|~ \y \in \RR^m_{\ge 0}, A\y \le \c\} = \min \{\langle \c, \z\rangle ~|~ \z \in \RR^n_{\ge 0}, A^\text{T}\z \ge \1\}. \label{eq.CCdualsystem}
\end{align}

\begin{definition} \label{def.properties}
Let $G$ be a simple hypergraph.
\begin{enumerate}
\item The hypergraph $G$ is said to \emph{pack} if the dual system (\ref{eq.CCdualsystem}) has integral optimal solutions $\y$ and $\z$ when $\c = \1$.
\item The hypergraph $G$ is said to have the \emph{packing property} if the dual system (\ref{eq.CCdualsystem}) has integral optimal solutions $\y$ and $\z$ for all vectors $\c$ with components equal to 0, 1 and $+\infty$.
\item The hypergraph $G$ is said to have the \emph{max-flow-min-cut (MFMC)
property} or to be \emph{Mengerian} if the dual system (\ref{eq.CCdualsystem})
has integral optimal solutions $\y$ and $\z$ for all nonnegative integral
vectors $\c \in \ZZ_{\ge 0}^n$.
\end{enumerate}
\end{definition}

\begin{remark}
In Definition \ref{def.properties}, setting an entry of $\c$ to $+\infty$ means that this entry is sufficiently large, so the corresponding inequality in
%saying that entries of $\c$ are $+\infty$
%means that these entries are sufficiently large (for which corresponding
%inequalities in 
the system $A\y \le \c$ can be omitted. It is clear that if $G$
satisfies the MFMC property, then it has the packing property. 
\end{remark}

The following conjecture was stated in \cite[Conjecture 1.6]{Cornuejols} with a reward prize of \$5,000 for the solution.

\begin{conjecture}[Conforti-Cornu\'ejols] \label{conj.CC}
A hypergraph has the packing property if and only if it has the max-flow-min-cut property.
\end{conjecture}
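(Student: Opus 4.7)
The plan is to exploit the algebraic reformulation previewed in Section~\ref{s:packing}: translate both the packing property and the MFMC property into statements about symbolic and ordinary powers of the edge ideal, and then try to close the gap between the two inside commutative algebra. The direction MFMC $\Rightarrow$ packing is immediate, since the packing condition is literally the $\c = \1$ case of the defining condition of MFMC. All of the substance lies in the converse.

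First I would invoke the by now classical translation (developed in the thread going back through \cite{HoHu}): a simple hypergraph $G$ has the MFMC property if and only if its edge ideal $I = I(G)$ is \emph{normally torsion-free}, that is, $I^s = I^{(s)}$ for every $s \ge 1$. This reduces Conjecture~\ref{conj.CC} to the purely algebraic statement that if every minor of $G$ has K\"onig's property (equivalently, $G$ has the packing property), then $I^s = I^{(s)}$ for all $s \ge 1$. In parallel I would set up the dictionary between clutter minors and monomial operations: deletion of $x_i$ corresponds to the restriction $I/(x_i)$ (setting $x_i = 0$), contraction of $x_i$ to the saturation $(I : x_i^\infty)$, and both operations preserve the class of squarefree monomial ideals, so that the packing hypothesis is stable under the moves one needs to make.

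The core of the plan is a minimal-counterexample argument. Suppose $G$ packs but $I^s \ne I^{(s)}$ for some smallest $s \ge 2$; pick $\bm^\alpha \in I^{(s)} \setminus I^s$ of minimal support, and an associated prime $\pp_W \in \Ass(S/I^s)$ witnessing the failure. In the spirit of Corollary~\ref{t:oddcycleass} and Theorem~\ref{t.expansion}, one expects the minor $G'$ obtained by contracting the vertices outside $W$ and then deleting suitably inside $W$ to itself fail to pack: the fractional data $(s,\alpha)$ should unpack into a fractional matching of $G'$ that strictly dominates every integer matching, contradicting K\"onig for $G'$. Making this rigorous requires (i) a precise description of which irreducible components of $I^s$ produce the excess primes of $I^{(s)}$, and (ii) a constructive procedure turning such an algebraic witness into an explicit non-packing minor.

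Step (ii) is the main obstacle, and the reason the conjecture has resisted attack since the 1990s. All known clutters that violate MFMC (odd holes, $Q_6$, finite projective planes) are, upon inspection, also seen to violate packing; but no general mechanism is known that, from arbitrary algebraic witness data $(s,\alpha,\pp_W)$ of non-normal-torsion-freeness, extracts a combinatorial witness to a packing failure in some minor. A promising refinement would be to study the symbolic Rees algebra $\R_s(I)$ together with its natural comparison map to the ordinary Rees algebra $\R(I)$, seeking a finite family of obstruction generators whose supports already force non-packing in a minor. Absent such a structural theorem, the proposed argument stalls at precisely the step where commutative algebra must hand back combinatorics, which is why only the easy direction of Conjecture~\ref{conj.CC} is currently in hand.
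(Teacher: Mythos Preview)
The statement is a \emph{conjecture}, not a theorem: the paper does not prove it, and indeed presents it as open (with a \$5{,}000 prize attached). There is therefore no proof in the paper against which to compare your proposal. Your write-up is honest about this --- you explicitly note that the argument stalls at step (ii) and that only the easy direction is in hand --- so in that sense you and the paper agree: MFMC $\Rightarrow$ packing is trivial, and packing $\Rightarrow$ MFMC remains unresolved.

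What the paper does contribute, and what you sketch only loosely, is a precise algebraic reformulation rather than a proof strategy. Theorem~\ref{thm.MFMCreduce} and Theorem~\ref{thm.33} establish rigorously that MFMC for $G$ is equivalent to $I(G)^{(q)} = I(G)^q$ for all $q$, equivalently to $\gr_I$ being reduced, equivalently to $I$ being normally torsion-free (the reference is \cite{HHTZ,GVV}, not \cite{HoHu} as you suggest). The paper then restates the conjecture purely algebraically (Conjectures~\ref{conj.CCalgebraic} and~\ref{conj.trans}): if $\height I' = \operatorname{m-grade} I'$ for $I$ and all its minors, then $\gr_I$ is reduced. Your minimal-counterexample outline is a reasonable heuristic for how one might attack this reformulation, but you correctly identify the gap: nobody currently knows how to extract a non-K\"onig minor from an algebraic witness $\mathbf{m}^\alpha \in I^{(s)} \setminus I^s$. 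The paper does not attempt this step either; it instead closes with Problem~\ref{prob.algebraic}, listing related questions one might hope to settle short of the full conjecture.
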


As we have remarked, the main point of Conjecture \ref{conj.CC} is to show that
if a hypergraph has the packing property then it also has the MFMC property.

The packing property can be understood via more familiar concepts in
(hyper)graph theory, namely \emph{vertex covers} (also referred to as
\emph{transversals}), which we recall from Section~\ref{intro}, and
\emph{matchings}.

\begin{definition}
A \emph{matching} (or \emph{independent set}) of a hypergraph $G$ is a set of pairwise disjoint edges.
\end{definition}

Let $\alpha_0(G)$ and $\beta_1(G)$ denote the minimum cardinality of
a vertex cover and the maximum cardinality of a matching in $G$,
respectively. We have
$\alpha_0(G) \ge \beta_1(G)$ since every edge in any matching must
hit at least one vertex from every cover.

The hypergraph $G$ is said to be \emph{K\"{o}nig} if
$\alpha_0(G)=\beta_1(G)$. Observe that giving a vertex cover and a
matching of equal size for $G$ can be viewed as giving integral solutions to the dual system (\ref{eq.CCdualsystem}) when $\c = \1$. Thus, $G$ is K\"onig if and only if $G$ packs.

There are two operations commonly used on a hypergraph $G$ to
produce new, related hypergraphs on smaller vertex sets. Let $x \in V$ be a vertex in $G$.
The \emph{deletion} $G \setminus x$ is formed by removing $x$ from the vertex set and deleting any edge in $G$ that
contains $x$. The \emph{contraction} $G / x$ is obtained by removing $x$ from the vertex set and removing $x$ from any edge of $G$ that contains $x$. Any hypergraph obtained from $G$ by a sequence of deletions and contractions is called a \emph{minor} of $G$. Observe that the deletion and contraction of a vertex $x$ in $G$ has the same effect as setting the corresponding component in $\c$ to $+\infty$ and 0, respectively, in the dual system (\ref{eq.CCdualsystem}). Hence,
\begin{center}
\emph{$G$ satisfies the packing property if and only if $G$ and all of its minors are K\"onig.}
\end{center}

\begin{example} Let $G$ be a 5-cycle. Then $G$ itself is not K\"onig ($\alpha_0(G) = 3$ and $\beta_1(G) = 2$). Thus, $G$ is does not satisfy the packing property.
\end{example}

\begin{example} Any bipartite graph is K\"onig. Therefore, if $G$ is a bipartite graph then (since all its minors are also bipartite) $G$ satisfies the packing property.
\end{example}

We shall now explore how Conjecture \ref{conj.CC} can be understood via commutative algebra, and more specifically, via algebraic properties of edge ideals.

As noted in Section \ref{s:prel}, symbolic Rees algebras are more complicated than the ordinary Rees algebras, and could be non-Noetherian. Fortunately, in our situation, the symbolic Rees algebra of a squarefree monomial ideal is always Noetherian and finitely generated (cf. \cite[Theorem 3.2]{HHT}). Moreover, the symbolic Rees algebra of the edge ideal of a hypergraph $G$ can also be viewed as the \emph{vertex cover algebra} of the dual hypergraph $G^*$.

\begin{definition} Let $G = (V,E)$ be a simple hypergraph over the vertex set $V = \{x_1, \dots, x_n\}$.
\begin{enumerate}
\item We call a nonnegative integral vector $\c = (c_1, \dots, c_n)$ a \emph{$k$-cover} of $G$ if $\sum_{x_i \in e} c_i \ge k$ for any edge $e$ in $G$.
\item The \emph{vertex cover algebra} of $G$, denoted by $\A(G)$, is defined to be
$$\A(G) = \bigoplus_{k \ge 0} \A_k(G),$$
where $\A_k(G)$ is the $k$-vector space generated by all monomials $x_1^{c_1} \dots x_n^{c_n}t^k$ such that $(c_1, \dots, c_n) \in \ZZ^n_{\ge 0}$ is a $k$-cover of $G$.
\end{enumerate}
\end{definition}

\begin{lemma} \label{lem.vertexcoveralgebra}
Let $G$ be a simple hypergraph with edge ideal $I = I(G)$, and let $G^*$ be its dual hypergraph. Then
$$\R_s(I) = \A(G^*).$$
\end{lemma}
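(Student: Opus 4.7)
The plan is to unpack both algebras monomial-by-monomial inside $S[t]$ and show that the sets of monomials coincide graded piece by graded piece. Concretely, I want to prove that for every $q \ge 0$ and every $\c = (c_1,\dots,c_n) \in \ZZ_{\ge 0}^n$, the monomial $\x^{\c} t^q = x_1^{c_1}\cdots x_n^{c_n} t^q$ lies in $I^{(q)} t^q$ if and only if $\c$ is a $q$-cover of $G^*$. Since both $\R_s(I)$ and $\A(G^*)$ are monomial subalgebras of $S[t]$, this equivalence will immediately yield the claimed isomorphism of graded $K$-algebras.

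First I would write out the primary decomposition of $I$. Because $I = I(G)$ is squarefree, $\Ass(S/I) = \Min(S/I)$ and the primary decomposition takes the form $I = \bigcap_w \pp_w$, where $w$ ranges over the minimal vertex covers of $G$. By Proposition \ref{alexduality}, these minimal vertex covers are exactly the minimal generators of $J(G) = I(G)^{\vee} = I(G^*)$, i.e., the edges of the dual hypergraph $G^*$. Plugging this into the definition of symbolic power gives
\[
I^{(q)} = \bigcap_{w\ \text{min.\ cover of }G} \pp_w^{q} = \bigcap_{e \in E(G^*)} \pp_e^{q}.
\]

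Next I would translate membership in $\pp_e^{q}$ into a numerical condition on the exponent vector. Since $\pp_e = (x_i : x_i \in e)$ is generated by variables, a monomial $\x^{\c}$ lies in $\pp_e^{q}$ if and only if the total $\c$-weight carried by the variables of $e$ is at least $q$, that is, $\sum_{x_i \in e} c_i \ge q$. Intersecting over $e \in E(G^*)$, we obtain $\x^{\c} \in I^{(q)}$ exactly when $\sum_{x_i \in e} c_i \ge q$ for every edge $e$ of $G^*$, which is precisely the definition of $\c$ being a $q$-cover of $G^*$. Thus the $K$-span of monomials in $I^{(q)} t^q$ equals the $K$-span of monomials $\x^{\c} t^q$ with $\c$ a $q$-cover of $G^*$, i.e., $\A_q(G^*)$, and summing over $q$ yields $\R_s(I) = \A(G^*)$.

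There is no real obstacle in this argument; the entire proof is a dictionary translation. The only step that requires a small piece of input from earlier in the paper is identifying the minimal primes of $I(G)$ with the edges of $G^*$, and this is exactly Proposition \ref{alexduality}. The rest is the standard description of powers of monomial primes and the definitions of $I^{(q)}$ and $\A_q(G^*)$.
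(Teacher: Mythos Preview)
Your argument is correct. The paper actually states this lemma without proof, treating it as a direct consequence of the definitions (and of the identification of $\Ass(S/I(G))$ with the edges of $G^{*}$ via Proposition~\ref{alexduality}); your write-up is exactly the expected unpacking of those definitions, so there is nothing to compare.
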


We are now ready to give an algebraic interpretation of the MFMC property.

\begin{lemma} \label{lem.sigmagamma}
Let $G = (V,E)$ be a simple hypergraph with $n$ vertices and $m$ edges. Let $A$ be its incidence matrix. For a nonnegative integral vector $\c \in \ZZ^n_{\ge 0}$, define \\
\hspace*{6ex} $\sigma(\c) = \max \{ \langle \1, \y \rangle ~|~ \y \in \ZZ^m_{\ge 0}, A\y \le \c \}$, and \\
\hspace*{6ex} $\gamma(\c) = \min \{ \langle \c, \z \rangle ~|~ \z \in \ZZ^n_{\ge 0}, A^{\text{T}}\z \ge \1 \}.$  \\
Then
\begin{enumerate}
\item $\c$ is a $k$-cover of $G^*$ if and only if $k \le \gamma(\c)$.
\item $\c$ can be written as a sum of $k$ vertex covers of $G^*$ if and only if $k \le \sigma(\c)$. %$\sigma(\c) = \max \{ k ~|~ \c \text{ can be written as a sum of } k \text{ vertex covers of } G^* \}.$
\end{enumerate}
\end{lemma}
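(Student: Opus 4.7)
The plan is to reduce both parts to a careful bookkeeping argument using Proposition~\ref{alexduality}, which identifies the edges of $G^*$ with the minimal vertex covers of $G$, and the minimal vertex covers of $G^*$ with the edges of $G$. I would set notation at the start: write the columns of $A$ as the characteristic vectors $\chi_{e_1},\dots,\chi_{e_m}$ of the edges of $G$, so that $A\y = \sum_j y_j \chi_{e_j}$ and $(A^{\text{T}}\z)_j = \sum_{x_i\in e_j} z_i$. The constraint $A^{\text{T}}\z\ge\1$ then says that $\{x_i : z_i \ge 1\}$ is a vertex cover of $G$, and the constraint $A\y\le \c$ says that the multiset of edges $e_j$ chosen with multiplicity $y_j$ fits inside $\c$ componentwise.

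For part~(1), I would first reduce $\gamma(\c)$ to a minimum over characteristic vectors of minimal vertex covers of $G$. Given any feasible $\z \in \ZZ_{\ge 0}^n$, replace it by the componentwise minimum of $\z$ and $\1$; this still satisfies $A^{\text{T}}\z \ge \1$ and does not increase $\langle \c,\z\rangle$ because $\c \ge \0$. Further, restricting to the support of a minimal cover only decreases the objective. So $\gamma(\c) = \min\{\sum_{x_i\in w} c_i : w \text{ a minimal vertex cover of } G\}$. By Proposition~\ref{alexduality} the minimal covers of $G$ are exactly the edges of $G^*$, whence $\gamma(\c) = \min_{e^*\in E(G^*)} \sum_{x_i\in e^*} c_i$. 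By definition, $\c$ is a $k$-cover of $G^*$ precisely when every such sum is $\ge k$, i.e.\ when $\gamma(\c)\ge k$.

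For part~(2), the key translation is that $J(G^*) = I(G^{**}) = I(G)$, so a monomial $x^{\c'}$ lies in the cover ideal of $G^*$ if and only if $\c' \ge \chi_{e_j}$ componentwise for some edge $e_j$ of $G$. Interpreting ``sum of $k$ vertex covers of $G^*$'' as a decomposition $\c = \c_1+\cdots+\c_k$ with each $x^{\c_i}$ in $J(G^*)$, I would prove the two directions as follows. ($\Leftarrow$) Given $\y\in\ZZ_{\ge 0}^m$ with $\langle\1,\y\rangle = k$ and $A\y\le \c$, list the edges $e_{f(1)},\dots,e_{f(k)}$ in which each $e_j$ occurs with multiplicity $y_j$, put $\c_i = \chi_{e_{f(i)}}$ for $i\ge 2$, and absorb the componentwise nonnegative surplus $\c - A\y$ into $\c_1$. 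Then each $\c_i$ dominates an edge, and $\sum \c_i = \c$. ($\Rightarrow$) Conversely, from a decomposition $\c = \c_1+\cdots+\c_k$ with $\c_i \ge \chi_{e_{f(i)}}$, set $y_j = |f^{-1}(j)|$; then $A\y = \sum_i \chi_{e_{f(i)}} \le \sum_i \c_i = \c$ and $\langle \1,\y\rangle = k$, so $\sigma(\c)\ge k$. Finally, if $\sigma(\c)\ge k$, I can shrink a witnessing $\y$ to one with $\langle\1,\y\rangle$ exactly $k$ and run the $(\Leftarrow)$ construction, completing the equivalence.

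I expect the only delicate step to be the absorption argument in $(\Leftarrow)$ of part~(2): one must check that the ``slack'' $\c - A\y\ge \0$ can be distributed among the $\c_i$ without violating the condition that each $x^{\c_i}$ lies in $J(G^*)$. Dumping all the slack into a single $\c_i$ works because once a vector dominates an edge, adding nonnegative integers to any of its coordinates preserves this property; so this obstacle dissolves once stated carefully. The rest is a translation between LP language and the combinatorial definitions afforded by Proposition~\ref{alexduality}.
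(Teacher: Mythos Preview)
Your proposal is correct and follows essentially the same route as the paper: both parts reduce $\gamma(\c)$ to a minimum over $\{0,1\}$-vectors representing minimal covers of $G$ (hence edges of $G^*$), and identify feasible $\y$ with decompositions of $\c$ into covers of $G^*$ via the columns of $A$. Your write-up is in fact more explicit than the paper's, which leaves the equivalence in part~(2) as ``one can show''; your absorption of the slack $\c - A\y$ into a single summand and the explicit count $y_j = |f^{-1}(j)|$ are exactly the details needed to fill that in.
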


\begin{proof}
By definition, a nonnegative integral vector $\c = (c_1, \dots, c_n) \in \ZZ^n_{\ge 0}$ is a $k$-cover of $G^*$ if and only if
\begin{align}
k \le \min \{ \sum_{x_i \in e} c_i ~|~ e \text{ is any edge of } G^* \}. \label{eq.k-cover}
\end{align}
Let $\z$ be the $(0,1)$-vector representing $e$. Observe that $e$ is an edge of $G^*$ if and only if $e$ is a minimal vertex cover of $G$, and this is the case if and only if $A^{\text{T}}\z \ge \1$. Therefore, the condition in (\ref{eq.k-cover}) can be translated to
\begin{align*}
k & \le \min \{ \langle \c, \z \rangle ~|~ \z \in \{0,1\}^n, A^{\text{T}}\z \ge \1 \} \\
& = \min \{ \langle \c, \z \rangle ~|~ \z \in \ZZ^n_{\ge 0}, A^{\text{T}}\z \ge \1 \} = \gamma(\c).
\end{align*}

To prove (2), let $\mba_1, \dots, \mba_m$ be representing vectors of the edges in $G$ (i.e., the columns of the incidence matrix $A$ of $G$). By Proposition~\ref{alexduality}, $\mba_1, \dots, \mba_m$ represent the minimal vertex cover of the dual hypergraph $G^*$. One can show that a nonnegative integral vector $\c \in \ZZ^n$ can be written as the sum of $k$ vertex covers (not necessarily minimal) of $G^*$ if and only if there exist integers $y_1, \dots, y_m \ge 0$ such that $k = y_1 + \dots + y_m$ and $y_1\mba_1 + \dots + y_m\mba_m \le \c$. Let $\y = (y_1, \dots, y_m)$. Then
$$\langle \1, \y \rangle = y_1 + \dots + y_m \text{ and } A\y = y_1\mba_1 + \dots + y_m\mba_m.$$
Thus,
$$\sigma(\c) = \max \{ k ~|~ \c \text{ can be written as a sum of } k \text{ vertex covers of } G^* \}.$$
\end{proof}

\begin{theorem} \label{thm.MFMCreduce}
Let $G$ be a simple hypergraph with dual hypergraph $G^*$. Then the dual linear programming system (\ref{eq.CCdualsystem}) has integral optimal solutions $\y$ and $\z$ for all nonnegative integral vectors $\c$ if and only if $\R_s(I(G)) = \A(G^*)$ is a standard graded algebra; or equivalently, if and only if $I(G)^{(q)} = I(G)^q$ for all $q \ge 0$.
\end{theorem}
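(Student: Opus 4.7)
The plan is to prove the three-way equivalence by routing both the MFMC condition and the standard-graded condition through the intermediate algebraic statement $I(G)^{(q)} = I(G)^q$ for every $q \ge 0$. Specifically, I would establish: (a) MFMC is equivalent to $\sigma(\c) = \gamma(\c)$ for every $\c \in \ZZ^n_{\ge 0}$; (b) this numerical equality is equivalent to $I(G)^{(q)} = I(G)^q$ for all $q$; and (c) the latter is equivalent to $\R_s(I(G))$ being standard graded. Part (a) is essentially a restatement of the definition of MFMC: real LP duality gives $\sigma_\RR(\c) = \gamma_\RR(\c)$ for every $\c$, and $\sigma(\c) \le \sigma_\RR(\c) = \gamma_\RR(\c) \le \gamma(\c)$, so integer optimal solutions $\y$ and $\z$ exist for a given $\c$ precisely when $\sigma(\c) = \gamma(\c)$.

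For step (b) I would translate monomial membership in both the ordinary and symbolic powers into the optimization quantities of Lemma~\ref{lem.sigmagamma}. A monomial $\x^\c$ lies in $I(G)^q$ if and only if it is divisible by some product of $q$ (not necessarily distinct) generators of $I(G)$, equivalently, if and only if there exists $\y \in \ZZ^m_{\ge 0}$ with $\langle \1, \y \rangle = q$ and $A\y \le \c$; this reads exactly as $\sigma(\c) \ge q$. For the symbolic power, since $I(G)$ is squarefree its associated primes are exactly the $\mathfrak{p}_w$ for minimal vertex covers $w$ of $G$, which are the edges of $G^*$, so $I(G)^{(q)} = \bigcap_w \mathfrak{p}_w^q$. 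Hence $\x^\c \in I(G)^{(q)}$ precisely when $\sum_{x_i \in w} c_i \ge q$ for every edge $w$ of $G^*$, i.e.\ when $\c$ is a $q$-cover of $G^*$; by Lemma~\ref{lem.sigmagamma}(1) this is $\gamma(\c) \ge q$. Since $\sigma(\c) \le \gamma(\c)$ always (either by (a) or directly from $I(G)^q \subseteq I(G)^{(q)}$), the equality $I(G)^q = I(G)^{(q)}$ holds for every $q$ if and only if $\sigma(\c) = \gamma(\c)$ for every $\c$.

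Step (c) is formal: $\R_s(I) = \bigoplus_{q \ge 0} I^{(q)} t^q$ is standard graded over $S$ precisely when its degree-$q$ component equals the $q$-fold product of its degree-$1$ component, that is, $I^{(q)} = (I^{(1)})^q$ for every $q$; for the squarefree ideal $I = I(G)$ one has $I^{(1)} = I$, so this reduces to $I^{(q)} = I^q$ for all $q$. Together with Lemma~\ref{lem.vertexcoveralgebra}, which identifies $\R_s(I(G))$ with $\A(G^*)$, this closes the circle. The main obstacle is the clean dictionary in step (b) between monomial ideal-membership and the optimization quantities $\sigma$ and $\gamma$: one must verify carefully that a monomial belongs to $I(G)^q$ exactly when its exponent vector dominates some integer combination $A\y$ with $\langle \1, \y\rangle = q$, and dually that belonging to $I(G)^{(q)}$ is the $q$-cover condition on the dual hypergraph. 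Once this dictionary is in place, the rest of the proof is essentially bookkeeping.
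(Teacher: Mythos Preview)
Your proposal is correct and follows essentially the same route as the paper: both arguments reduce MFMC to the numerical identity $\sigma(\c)=\gamma(\c)$ for all $\c$ and then invoke Lemma~\ref{lem.sigmagamma} (together with Lemma~\ref{lem.vertexcoveralgebra}) to translate this into the equality $I(G)^{(q)}=I(G)^q$, equivalently the standard-gradedness of $\R_s(I(G))=\A(G^*)$. Your steps (b) and (c) simply unpack what the paper leaves implicit in the phrase ``the conclusion then follows from Lemmas~\ref{lem.vertexcoveralgebra} and~\ref{lem.sigmagamma}.''
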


\begin{proof} Given integral optimal solutions $\y$ and $\z$ of the dual system (\ref{eq.CCdualsystem}) for a nonnegative integral vector $\c$, we get
$$\sigma(\c) = \gamma(\c).$$
The conclusion then follows from Lemmas \ref{lem.vertexcoveralgebra} and
\ref{lem.sigmagamma}.
\end{proof}

The following result (see \cite[Corollary 1.6]{HHTZ} and \cite[Corollary 3.14]{GVV}) gives an algebraic approach to Conjecture \ref{conj.CC}.

\begin{theorem} \label{thm.33}
Let $G$ be a simple hypergraph with edge ideal $I = I(G)$. The following conditions are equivalent:
\begin{enumerate}
\item $G$ satisfies the MFMC property,
\item $I^{(q)} = I^q$ for all $q \ge 0$,
\item The associated graded ring $\gr_{I} := \bigoplus_{q \ge 0} I^q/I^{q+1}$ is reduced,
\item $I$ is \emph{normally torsion-free}, i.e., all powers of $I$ have the same associated primes.
\end{enumerate}
\end{theorem}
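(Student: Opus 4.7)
The plan is to prove the four conditions are equivalent via a short cycle of implications, leveraging Theorem~\ref{thm.MFMCreduce} together with standard facts about squarefree monomial ideals and their Rees algebras.

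The equivalence $(1) \Leftrightarrow (2)$ is immediate from Theorem~\ref{thm.MFMCreduce}, which characterizes MFMC by the standard gradedness of the symbolic Rees algebra $\R_s(I) = \A(G^*)$, which is exactly the condition $I^{(q)} = I^q$ for all $q \ge 0$.

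For $(2) \Leftrightarrow (4)$, I would use that $I$ is radical, so $\Ass(S/I) = \Min(I)$ and $I^{(q)} = \bigcap_{P \in \Min(I)} P^q$ by definition. For each $P \in \Min(I)$, localization at $P$ identifies $P^q$ as the $P$-primary component of $I^q$ in any irredundant primary decomposition. Hence $I^q = I^{(q)}$ if and only if $I^q$ has no embedded associated primes, i.e., $\Ass(S/I^q) = \Ass(S/I)$. Letting $q$ range over $\N$ yields the equivalence with $I$ being normally torsion-free.

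For $(2) \Rightarrow (3)$, I would assume $I^q = I^{(q)} = \bigcap_{P \in \Min(I)} P^q$ for all $q$ and consider the natural graded ring map
\[
\gr_I(S) \longrightarrow \prod_{P \in \Min(I)} \gr_P(S),
\]
whose degree-$q$ component sends $f + I^{q+1}$ to the tuple $\bigl( f + P^{q+1} \bigr)_{P \in \Min(I)}$. Its kernel in degree $q$ is $(I^q \cap \bigcap_P P^{q+1})/I^{q+1} = (I^q \cap I^{(q+1)})/I^{q+1} = I^{q+1}/I^{q+1} = 0$, so the map is injective. Since each $\gr_P(S)$ is a polynomial ring over $S/P$ and hence reduced, $\gr_I(S)$ is reduced as well.

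The remaining implication $(3) \Rightarrow (2)$ is where I expect the main obstacle. Reducedness of $\gr_I = \R(I)/I\R(I)$ is a delicate ring-theoretic condition that must be parlayed into statements about primary decompositions of all powers $I^q$ simultaneously. The approach is to exploit the toric structure of $\R(I)$ and its minimal primes: reducedness forces the minimal primes of $I\R(I)$ to contract cleanly to the minimal primes of $I$ in $S$, which constrains $\Ass(S/I^q) \subseteq \Min(I)$ for every $q$, giving (4). The technical bookkeeping here is the heart of the theorem and is executed in \cite[Corollary~1.6]{HHTZ} and \cite[Corollary~3.14]{GVV}.
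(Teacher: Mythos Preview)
Your argument is correct, and it is more explicit than the paper's own proof. The paper handles $(1)\Leftrightarrow(2)$ exactly as you do, via Theorem~\ref{thm.MFMCreduce}, but then dispatches the remaining equivalences among $(2)$, $(3)$, $(4)$ in a single sentence as ``well known results in commutative algebra,'' citing only \cite{HSV}. You instead supply direct elementary arguments for $(2)\Leftrightarrow(4)$ (identifying $I^{(q)}$ with the intersection of the minimal primary components of $I^q$) and for $(2)\Rightarrow(3)$ (via the injection $\gr_I(S)\hookrightarrow\prod_{P\in\Min(I)}\gr_P(S)$ into a product of polynomial rings), deferring only the genuinely nontrivial implication $(3)\Rightarrow(2)$ to the literature. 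Your references \cite{HHTZ} and \cite{GVV} are the ones the paper attaches to the theorem statement itself, but the specific source the paper invokes for $(3)\Rightarrow(2)$ is \cite{HSV}, where the result is proved for ideals in normal domains rather than through the toric heuristic you sketch; your description of the mechanism is a reasonable signpost but not quite how the cited proofs proceed. The payoff of your route is transparency for three of the four implications; the paper's route buys brevity.
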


\begin{proof} The equivalence between (1) and (2) is the content of Theorem \ref{thm.MFMCreduce}. The equivalences of (2), (3) and (4) are well known results in commutative algebra (cf. \cite{HSV}).
\end{proof}

The Conforti-Cornu\'ejols conjecture now can be restated as follows.

\begin{conjecture} \label{conj.CCalgebraic}
Let $G$ be a simple hypergraph with edge ideal $I = I(G)$. If $G$ has packing property then the associated graded ring $\gr_I$ is reduced. Equivalently, if $G$ and all its minors are K\"onig, then the associated graded ring $\gr_I$ is reduced.
\end{conjecture}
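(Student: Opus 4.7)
The plan is to exploit Theorem \ref{thm.33}, which reduces the conjecture to showing that $I^{(q)} = I^q$ for every $q \ge 1$, and via Lemma \ref{lem.sigmagamma} and Theorem \ref{thm.MFMCreduce}, to the LP-theoretic equality $\sigma(\c) = \gamma(\c)$ for every $\c \in \ZZ_{\ge 0}^n$. The packing hypothesis supplies exactly this when $\c$ has entries in $\{0,1,\infty\}$, and does so not only for $G$ but for all of its minors. So the heart of the problem is to promote this equality from $(0,1,\infty)$-valued weights to arbitrary nonnegative integer weights, using the \emph{minor-closed} nature of the hypothesis.

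The first move I would make is an ``integer reduction'' step. Given $\c$, I would form a hypergraph $G_\c$ by deleting each vertex with weight $\infty$, contracting each vertex with weight $0$, and replacing each vertex of finite weight $c_i \ge 1$ by $c_i$ copies (shadows) that inherit the edge structure of the original vertex, in the spirit of the expansion of Definition \ref{d.expansion}. A direct check should give $\sigma_G(\c) = \beta_1(G_\c)$ and $\gamma_G(\c) = \alpha_0(G_\c)$, so that $G_\c$ being K\"onig would force $\sigma(\c) = \gamma(\c)$ and complete the inductive step. Since the deletion and contraction operations used on $G_\c$ pull back to the analogous operations on $G$, it would suffice to prove that $G_\c$ itself has the packing property.

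The main obstacle — and in fact the true open core of the conjecture — is precisely that preservation of packing under vertex multiplication is not known. Vertex multiplication is \emph{not} one of the minor operations used in Definition \ref{def.properties}, and it is not clear that duplicating a single vertex of a K\"onig hypergraph preserves K\"onigness, let alone packing on all minors of the duplicate. A cleaner restatement of the conjecture is therefore: \emph{if $G$ has the packing property, then so does every expansion $G^s$}. One could attempt this by LP rounding, exploiting the block structure of the incidence matrix $A(G_\c)$ in terms of $A(G)$, but the usual rounding techniques fail because the fractional optimum need not have an integral representative supported on a K\"onig sub-configuration.

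A parallel algebraic attack, which I would pursue in tandem, is to use Corollary \ref{c.allprimes} to translate the question of embedded primes of $I^q = J(G^*)^q$ into the existence of a subset $T$ of the vertices of $(G^*)^q$ with $(G^*)^q_T$ critically $(q+1)$-chromatic but not arising from a minimal vertex cover of $G$. The hope would be to show, by deletion/contraction manipulations paralleling Conjecture \ref{c.expansion}, that any such critical configuration certifies the failure of the K\"onig property in some minor of $G$, contradicting packing. The key technical gap — and what I expect to be the hardest part — is establishing the combinatorial implication ``critical $(q{+}1)$-chromaticity in $(G^*)^q_T$ $\Longrightarrow$ K\"onig failure in a minor of $G$,'' since this is precisely where the purely structural content of packing must interact with the coloring-theoretic content of critical subhypergraphs. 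Any genuine proof will likely need a new polyhedral or combinatorial principle that bridges these two worlds, beyond the tools assembled in this survey.
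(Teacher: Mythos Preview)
The statement you are addressing is a \emph{conjecture}, not a theorem: the paper gives no proof of it, because none is known. Conjecture~\ref{conj.CCalgebraic} is presented explicitly as an algebraic restatement of the Conforti--Cornu\'ejols conjecture (Conjecture~\ref{conj.CC}), obtained by combining the equivalences of Theorem~\ref{thm.33} with the combinatorial characterization of packing as ``$G$ and all its minors are K\"onig.'' That derivation of the \emph{equivalence} is all the paper does here; it does not attempt to settle the conjecture itself.

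Your proposal is therefore not comparable to any proof in the paper. To your credit, you diagnose the situation accurately: the reduction via Theorem~\ref{thm.33} and Lemma~\ref{lem.sigmagamma} to $\sigma(\c)=\gamma(\c)$ for all $\c\in\ZZ_{\ge 0}^n$ is exactly the content of Theorem~\ref{thm.MFMCreduce}, and you correctly isolate the genuine obstruction --- that passing from $(0,1,\infty)$-valued weights to arbitrary nonnegative integer weights amounts to showing packing is preserved under vertex replication, which is \emph{not} a minor operation and is precisely the open heart of the problem. Your alternative line through Corollary~\ref{c.allprimes} is a reasonable research direction, but as you yourself note, the implication ``critical $(q{+}1)$-chromaticity in $(G^*)^q_T$ forces a K\"onig failure in a minor of $G$'' is unproven and would essentially resolve the conjecture. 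In short: your write-up is an honest research outline with the gap clearly flagged, not a proof --- and that is the correct status, since no proof exists.
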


It remains to give an algebraic characterization for the packing
property. To achieve this, we shall need to interpret minors and the K\"onig property.
Observe that the deletion $G \setminus x$ at a vertex $x \in \X$ has the effect of setting
$x = 0$ in $I(G)$ (or equivalently, of passing to the ideal $(I(G),x)/(x)$ in
the quotient ring $S/(x)$), and the contraction $G / x$ has the effect of
setting $x=1$ in $I(G)$ (or equivalently, of passing to the ideal $I(G)_x$ in
the localization $S_x$). Thus, we call an ideal $I'$ a \emph{minor} of a
squarefree monomial ideal $I$ if $I'$ can be obtained from $I$ by a sequence of
taking quotients and localizations at the variables.
Observe further that $\alpha_0(G) = \height I(G)$, and
if we let $\operatorname{m-grade} I$ denote the maximum length of a regular
sequence of monomials in $I$ then $\beta_1(G) = \operatorname{m-grade} I(G)$.
Hence, a simple hypergraph with edge ideal $I$ is K\"onig if $\height I = \operatorname{m-grade} I$.
This leads us to a complete algebraic reformulation of the
Conforti-Cornu\'ejols conjecture:

\begin{conjecture} \label{conj.trans}
    Let $I$ be a squarefree monomial ideal such that $I$ and all of its minors satisfy the property that their heights are the same as their m-grades. Then $\gr_I$ is reduced; or equivalently, $I$ is normally torsion-free.
\end{conjecture}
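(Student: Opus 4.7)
The plan is to proceed by induction on the number of variables $n$, invoking the equivalence $(1) \Leftrightarrow (2)$ in Theorem~\ref{thm.33} to reduce the problem to showing that $I^{(q)} = I^q$ for every $q \ge 1$. The minor hypothesis is manifestly preserved under further minor operations---a minor of a minor of $I$ is itself a minor of $I$---and both the height and the m-grade are computed on a strictly smaller set of variables after either quotienting by $(x_i)$ or localizing at $x_i$. Consequently the inductive hypothesis supplies $\overline{I}{}^{(q)} = \overline{I}{}^q$ for $\overline{I} = (I+(x_i))/(x_i) \subset S/(x_i)$ and $I_{x_i}^{(q)} = I_{x_i}^q$ for all $q \ge 1$ and all $i$.

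Assume for contradiction that some $m \in I^{(q)} \setminus I^q$ exists, and fix a counterexample with the minimum possible value of $q$ and, among those, with minimum total degree. The first step is to show that the support of $m$ must meet every variable: if some $x_i \nmid m$, the image of $m$ in $S/(x_i)$ lies in $\overline{I}{}^{(q)} \setminus \overline{I}{}^q$, contradicting the deletion-minor case; a dual argument in the localization $S_{x_i}$ rules out $m$ containing $x_i$ to an excessive power. After this reduction, one may assume the exponent vector of $m$ is bounded entry-by-entry by $q$ and uses every variable nontrivially.

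The core of the argument is then to convert the packing hypothesis into an explicit cover decomposition of $m$. The equality $\height I = \operatorname{m-grade} I$ yields a maximal regular sequence of pairwise-disjoint squarefree monomials in $I$, which is exactly a maximum matching of $G = G(I)$ meeting a minimum vertex cover of the dual $G^*$. The plan is to use this matching/cover certificate to exhibit $m$ as a product of $q$ minimal vertex covers of $G^*$; by Proposition~\ref{alexduality} this would place $m$ in $J(G^*)^q = I^q$, the desired contradiction. The most natural vehicle is the expansion machinery of Section~\ref{s:higherpowers}: Corollary~\ref{c.allprimes}, applied on the dual side, reinterprets the relation $m \in I^{(q)} \setminus I^q$ as the existence of a critically $(q+1)$-chromatic induced subhypergraph of the $q^{\text{th}}$ expansion $(G^*)^q$ whose depolarization is $m$. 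One then needs to show that the packing-on-all-minors hypothesis forces such a subhypergraph to be \emph{peelable}, meaning that it contains a minimum cover whose removal leaves a critically $q$-chromatic expansion subhypergraph to which the inductive hypothesis on $q$ applies.

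The hard part---and the reason Conjecture~\ref{conj.trans}, equivalent to Conforti--Cornu\'ejols, has resisted every attack for over two decades despite its \$5{,}000 bounty---is precisely this peelability step. The obstacle is that the deletion and contraction minors of $G$ interact poorly with the vertex-duplication defining $(G^*)^q$: a critical chromatic obstruction can be born at the expansion level while being invisible to every proper deletion/contraction minor of $G^*$, so the control we gain from the minor hypothesis does not transfer to $(G^*)^q$ in any obvious way. Bridging this gap---perhaps by developing an expansion-compatible refinement of the notion of minor, or by proving a lifting theorem asserting that every critical chromatic obstruction in $(G^*)^q$ descends to a K\"onig failure in some genuine minor of $G^*$---is where any serious attempt on Conjecture~\ref{conj.trans} must do its real work.
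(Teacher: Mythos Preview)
The statement in question is a \emph{conjecture}, not a theorem: it is the algebraic reformulation of the Conforti--Cornu\'ejols conjecture, and the paper does not prove it. There is therefore no ``paper's own proof'' to compare your proposal against. You clearly recognize this yourself, since your final two paragraphs explicitly identify the step that fails and explain why the problem remains open.

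Given that, your write-up is not a proof proposal so much as a strategy sketch with an honestly declared gap. The inductive scaffolding you set up---passing to minors, reducing a putative counterexample $m \in I^{(q)}\setminus I^q$ to one of full support and bounded exponents---is standard and essentially correct; this kind of reduction is implicit in the literature around the conjecture. Your invocation of Corollary~\ref{c.allprimes} on the dual side (using $I = I(G) = J(G^*)$) is the right dictionary, though strictly speaking the corollary locates \emph{associated primes} of $J(G^*)^q$ rather than directly characterizing individual monomials $m \in I^{(q)}\setminus I^q$; to make that translation precise you would need to pass through the irreducible decomposition and Theorem~\ref{t.expansion} rather than the corollary alone.

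But the substantive point stands: the ``peelability'' step you isolate is exactly the missing idea, and you are right that no one knows how to supply it. Since the paper offers no proof, there is nothing further to grade here beyond noting that your assessment of where the difficulty lies is accurate.
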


The algebraic consequence of the conclusion of Conjecture \ref{conj.trans}
(and equivalently, Conjecture \ref{conj.CC}) is the equality $I^{(q)} =
I^q$ for all $q \ge 0$ or, equivalently, the normally torsion-freeness of $I$.
If one is to consider the equality $I^{(q)} = I^q$, then it is
natural to look for an integer $l$ such that $I^{(q)} = I^q$ for $0 \le q \le
l$ implies $I^{(q)} = I^q$ for all $q \ge 0$, or to examine squarefree monomial
ideals with the property that $I^{(q)} = I^q$ for all $q \ge q_0$. On the
other hand, if one is to
investigate the normally torsion-freeness then it is natural to study
properties of minimally not normally torsion-free ideals. The following problem
is naturally connected to Conjectures \ref{conj.CC} and \ref{conj.trans},
and part of it has been the subject of work in commutative algebra (cf.
\cite{HS}).

\begin{problem} \label{prob.algebraic}
Let $I$ be a squarefree monomial ideal in $S = K[x_1, \dots, x_n]$.
\begin{enumerate}
\item Find the least integer $l$ (may depend on $I$) such that if $I^{(q)} =
I^q$ for $0 \le q \le l$ then $I^{(q)} = I^q$ for all $q \ge 0$.
\item Suppose that there exists a positive integer $q_0$ such that $I^{(q)} =
I^q$ for all $q \ge q_0$. Study algebraic and combinatorial properties of $I$.
\item Suppose $I$ is minimally not normally torsion-free (i.e., $I$ is not
normally torsion-free but all its minors are). Find the least power $q$ such
that $\Ass(S/I^q) \not= \Ass(S/I)$.
\end{enumerate}
\end{problem}


\begin{thebibliography}{9999999}

\bibitem[BHH]{BHH} S. Bandari, J. Herzog, and T. Hibi, Monomial ideals whose depth function has any given number of strict local maxima. Preprint, 2012. {\tt arXiv:1205.1348}

\bibitem[BoHa]{BH} C. Bocci and B. Harbourne, Comparing powers and symbolic powers of ideals. \emph{J. Algebraic Geom.} {\bf 19} (2010), no. 3, 399--417.

\bibitem[Br]{Brodmann} M. Brodmann, Asymptotic stability of ${\rm Ass}(M/I\sp{n}M)$.
\emph{Proc. Amer. Math. Soc.} \textbf{74} (1979), 16--18.

\bibitem[BrHe]{Bruns-Herzog} W. Bruns and J. Herzog, Cohen-Macaulay rings. Cambridge Studies in Advanced Mathematics, {\bf 39}. \emph{Cambridge University Press, Cambridge}, 1993.

\bibitem[CGG]{CGG} M. V. Catalisano, A. V. Geramita, and A. Gimigliano, On the ideals of secant varieties to certain rational varieties. \emph{J. Algebra} {\bf 319} (2008), no. 5, 1913–-1931.

\bibitem[CRST]{CRST} M. Chudnovsky, N. Robertson, P. Seymour, and R. Thomas,
The strong perfect graph theorem. \emph{Ann. of Math.} (2)  {\bf 164} (2006), 51--229.

\bibitem[C]{Cornuejols} G. Cornu\'ejols, Combinatorial optimization: Packing and covering. CBMS-NSF Regional Conference Series in Applied Mathematics, {\bf 74}, \emph{SIAM, Philadelphia}, 2001.

\bibitem[ELS]{ELS} L. Ein, R. Lazarsfeld, and K.E. Smith, Uniform bounds and symbolic powers on smooth varieties. \emph{Invent. Math.} {\bf 144} (2001), no. 2, 241--252.

\bibitem[FHVT1]{FHVT} C. A. Francisco, H. T. H\`a, and A. Van Tuyl, Associated primes of monomial ideals and odd holes in graphs. \emph{J. Algebraic Combin.} {\bf 32} (2010), no. 2, 287--301.

\bibitem[FHVT2]{FHVT2} C. A. Francisco, H. T. H\`a, and A. Van Tuyl, A conjecture on critical graphs and connections to the persistence of associated primes.  \emph{Discrete Math.} {\bf 310} (2010), no. 15--16, 2176--2182.

\bibitem[FHVT3]{FHVTperfect} C. A. Francisco, H. T. H\`a, and A. Van Tuyl, Colorings of hypergraphs, perfect graphs, and associated primes of powers of monomial ideals. \emph{J. Algebra} {\bf 331} (2011), 224–-242.

\bibitem[GVV]{GVV} I. Gitler, C.E. Valencia, and R. Villarreal, A note on Rees algebras and the MFMC property. \emph{Beitr\"age Algebra Geom.} {\bf 48} (2007), no. 1, 141--150.

\bibitem[HS]{HS} H.T. H\`a, S. Morey, Embedded associated primes of powers of square-free monomial ideals. \emph{J. Pure Appl. Algebra} {\bf 214} (2010), no. 4, 301--308.

\bibitem[HeHi]{HH} J. Herzog, T. Hibi, The depth of powers of an ideal. \emph{J. Algebra} {\bf 291} (2005), 534--550.

\bibitem[HHT]{HHT} J. Herzog, T. Hibi, and N.V. Trung, Symbolic powers of monomial ideals and vertex cover algebras. \emph{Adv. Math.} {\bf 210} (2007), no. 1, 304--322.

\bibitem[HHTZ]{HHTZ} J. Herzog, T. Hibi, N.V. Trung, and X. Zheng, Standard graded vertex cover algebras, cycles and leaves. \emph{Trans. Amer. Math. Soc.} {\bf 360} (2008), no. 12, 6231--6249.

\bibitem[HoHu]{HoHu} M. Hochster and C. Huneke, Comparison of symbolic and ordinary powers of ideals. \emph{Invent. Math.} {\bf 147} (2002), no. 2, 349--369.

\bibitem[HT]{HostenThomas} S. Hosten and R. Thomas, The associated primes of initial ideals of lattice ideals. \emph{Math. Res. Lett.} \textbf{6} (1999), no. 1, 83--97.

\bibitem[HKV]{HKV} C. Huneke, D. Katz, and J. Validashti, Uniform equivalence of symbolic and adic topologies. \emph{Illinois J. Math.} {\bf 53} (2009), no. 1, 325--338.

\bibitem[HSV]{HSV} C. Huneke, A. Simis, and W. Vasconcelos, Reduced normal cones are domains. Invariant theory (Denton, TX, 1986), 95--101, Contemp. Math. {\bf 88}. \emph{Amer. Math. Soc., Providence, RI}, 1989.

\bibitem[MS]{MS} E. Miller and B. Sturmfels, Combinatorial Commutative Algebra. GTM {\bf 227}, \emph{Springer-Verlag}, 2004.

\bibitem[MT]{MT} N.C. Minh and N.V. Trung, Cohen-Macaulayness of monomial ideals and symbolic powers of Stanley-Reisner ideals. \emph{Adv. Math.} {\bf 226} (2011), no. 2, 1285--1306.

\bibitem[MV]{MV} S. Morey and R.H. Villarreal, Edge ideals: algebraic and combinatorial properties. Progress in Commutative Algebra: Ring Theory, Homology, and Decomposition. \emph{de Gruyter, Berlin}, 2012, 85--126.

\bibitem[N]{Nagata} M. Nagata, On the $14^\text{th}$ problem of Hilbert. \emph{Amer. J. Math.} {\bf 81} (1959), 766--772.

\bibitem[P]{Peeva} I. Peeva, Graded syzygies. Algebra and Applications, {\bf 14}. \emph{Springer-Verlag London, Ltd., London}, 2011.

\bibitem[R]{Roberts} P. Roberts, A prime ideal in a polynomial ring whose symbolic blow-up is not Noetherian. \emph{Proc. Amer. Math. Soc.} {\bf 94} (1985), 589--592.

\bibitem[SU]{SU} A. Simis and B. Ulrich, On the ideal of an embedded join.
\emph{J. Algebra} {\bf 226} (2000), no. 1, 1--14.

\bibitem[SS]{SS}  B. Sturmfels and S. Sullivant, Combinatorial secant varieties.
\emph{Pure Appl. Math. Q.} {\bf 2} (2006),  no. 3, part 1, 867--891.

\bibitem[TeTr]{TTerai} N. Terai and N.V. Trung, Cohen-Macaulayness of large powers of Stanley-Reisner ideals. \emph{Adv. Math.} {\bf 229} (2012), no. 2, 711--730.

\bibitem[TrTu]{TT} N.V. Trung and T.M. Tuan, Equality of ordinary and symbolic powers of Stanley-Reisner ideals. \emph{J. Algebra} {\bf 328} (2011), 77--93.

\bibitem[V]{Varbaro} M. Varbaro, Symbolic powers and matroids. \emph{Proc. Amer. Math. Soc.} {\bf 139} (2011), no. 7, 2357--2366.

\end{thebibliography}
\end{document}